\theoremstyle{plain} \numberwithin{equation}{section}
\newtheorem{theorem}{Theorem}
\newtheorem{corollary}[theorem]{Corollary}
\newtheorem{conjecture}{Conjecture}
\newtheorem{lemma}[theorem]{Lemma}
\newtheorem{proposition}[theorem]{Proposition}
\newtheorem{claim}{Claim}
\theoremstyle{definition}
\newtheorem{dfn}{Definition}
\newtheorem{remark}[theorem]{Remark}
\newtheorem{example}{Example}
\def\PL{\mathcal{P}(L)}
\def\SL{\mathcal{S}(L)}
\def\S{\mathcal{S}}
\def\SL{\mathcal{S}(L)}
\def\d{\partial}
\def\P{\mathcal{P}}
\def\G{\textbf{G}}
\def\MA{M(\mathcal{A})}
\def\SA{S(\mathcal{A})}
\def\M{\bar{M}(\mathcal{A})}
\def\BS{\bar{S}(\mathcal{A})}
\def\A{\mathcal{A}}
\begin{document}

\title{The Action dimension of Artin Groups}

%    Only \author and \address are required; other information is
%    optional.  Remove any unused author tags.

%    author one information
% \author[short version for running head]{name for top of paper}
\author{Giang Le}
\address{Math department, Ohio State University, 231 West 18th Avenue, Columbus OH 43210}
%\curraddr{MSRI}
\email{le.145@osu.edu}
\thanks{This research was partially supported by the NSF, under grant number DMS-1510640}

\subjclass[2000] {20F36, 20F65}
%    The 2010 edition of the Mathematics Subject Classification is
%    now available.  If you are citing a classification from the
%    new scheme, use the following input coding instead.
%\subjclass[2010]{Primary }

\date{}

\begin{abstract}
The \emph{action dimension} of a discrete group $G$ is the minimum dimension of a contractible manifold, which admits a proper $G$-action. In this paper, we study the action dimension of general Artin groups. The main result is that the action dimension of an Artin group with the nerve $L$ of dimension $n$ for $n \ne 2$ is less than or equal to $(2n + 1)$ if the Artin group satisfies the $K(\pi, 1)$-Conjecture and the top cohomology group of $L$ with $\mathbb{Z}$-coefficients is trivial. For $n = 2$, we need one more condition on $L$ to get the same inequality; that is the fundamental group of $L$ is generated by $r$ elements where $r$ is the rank of $H_1(L, \mathbb{Z})$.
\end{abstract}

\maketitle

\section{Introduction}\label{section:introduction}
The action dimension was first defined by M. Bestvina, M. Kapovich and B. Keiner in their paper titled \emph{Van Kampen's embedding obstruction for discrete groups} \cite{BKK}. In that paper, the authors introduced the so-called obstructor dimension of a discrete group and proved a theorem which states that if a group has obstructor dimension $m$ then the group cannot act properly on a contractible manifold of dimension less than $m$. From this result, the action dimension, denoted by \emph{actdim}, which is an upper bound for the obstructor dimension, is defined. In the same paper, as an example, the obstructor dimensions of the braid groups were calculated, which in turn can be used to determine their action dimensions. Another paper, in which the action dimension was studied, is \emph{The action dimension of right-angled Artin groups} by G. Avramidi, M. Davis, B. Okun and K. Schreve \cite{ADOS}. In their paper, the authors studied right-angled Artin groups (RAAGs) and determined the case when a RAAG has the maximum action dimension. 

Braid groups or RAAGs are examples of Artin groups, which were introduced by J. Tits in 1965 as extensions of Coxeter groups. Braid groups and RAAGs are well understood because of their additional geometric properties; however,  Artin groups in general are poorly understood. Many classical problems in group theory for Artin groups are still open, for example, questions about torsion-freeness, the word problem, the conjugation problem,... In this paper, we study the action dimension of a class of Artin groups which satisfy the so-called $K(\pi, 1)$-Conjecture. 

To set up the main theorem, we need to introduce some notation. An Artin group is a group with a presentation by a system of generators $a_i, i \in I$, and relations
$$a_ia_ja_i \dots = a_j a_i a_j \dots ; \text{   }i, j \in I $$
where the words on each side of these relations are sequences of $m_{ij}$ $(= m_{ji})$ alternating letters $a_i$ and $a_j$. We set $m_{ij} = \infty$ to mean that there is no relation between $a_i$ and $a_j$, and $m_{ii} = 1$.
The symmetric matrix $M = (m_{ij})_{i,j \in I}$ is a \emph{Coxeter matrix} on $I$. Corresponding to a Coxeter matrix is a \emph{Coxeter group}, which has the same definition as the Artin group but with additional relations $a_i^2 = 1$, for all $i \in I$. A Coxeter group and an Artin group can also be determined by a graph $\Gamma$, called the \emph{Coxeter diagram}, which has $I$ as its set of vertices and any two vertices $i$ and $j$ are connected if and only if $m_{ij} \ge 3$. If $\Gamma$ is connected, we say that the Artin group (and the Coxeter group) is \emph{irreducible}.

Related to an Artin group (or a Coxeter group) is a simplicial complex, called the \emph{nerve} $L$ of the Artin group (or the Coxeter group). The definition of the nerve is as follows: the vertices of $L$ are indexed by $I$, the index set of generators of the Artin group. Any collection of $k$ vertices of $L$ forms a $(k-1)$-simplex if and only if the Coxeter subgroup generated by those  corresponding $k$ generators is finite. We usually denote the Artin group and the Coxeter group corresponding to the nerve $L$ by $A_L$ and $W_L$, respectively. When the nerve $L$ is a simplex $\sigma$, we call it a \emph{spherical} Artin group $A_\sigma$ and the Coxeter group in this case is finite.

Let $L$ be a simplicial complex with edges labeled by integers greater than or equal to 2, and $A_L$ be the Artin group associated with $L$. The main result of this paper is a manifold construction which proves the following theorem:
 
\begin{theorem}\label{theorem:maintheorem}
If $A_L$ satisfies the $K(\pi, 1)$-Conjecture and $H^n(L, \mathbb{Z}) = 0$ (and when $n = 2$, further suppose that $\pi_1(L)$ is generated by $r$ elements where $r = rk H_1(L, \mathbb{Z})$), then actdim$(A_L) \le 2n + 1$.
\end{theorem}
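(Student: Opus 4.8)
The plan is to assemble a contractible $(2n+1)$-manifold carrying a proper cocompact $A_L$-action out of Deligne's models for the spherical parabolic subgroups, using the hypothesis $H^n(L,\mathbb Z)=0$ to keep the dimension down to $2n+1$. First I would record what the $K(\pi,1)$-Conjecture buys: the Salvetti complex $S_L$ — the finite CW complex with one cell of dimension $|\sigma|$ for every simplex $\sigma$ of $L$ together with the empty simplex — is then a $K(A_L,1)$, so $\operatorname{cd}(A_L)\le n+1$ and $H^*(A_L;-)$ is computed from $S_L$. Next, for a $k$-simplex $\sigma$ of $L$ the spherical parabolic $A_\sigma$ has rank $k+1$ and, by Deligne's theorem, equals $\pi_1(\bar M(\mathcal A_\sigma))$, where $\bar M(\mathcal A_\sigma)=M(\mathcal A_\sigma)/W_\sigma$ is the free quotient, by the finite reflection group $W_\sigma$, of the complement of the complexified reflection arrangement — an open complex $(k+1)$-manifold that is a $K(A_\sigma,1)$.

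The next step is to replace these by slightly lower-dimensional manifold models. Because $\mathcal A_\sigma$ is central, $M(\mathcal A_\sigma)$ carries a free $\mathbb C^{*}$-action (scaling) commuting with $W_\sigma$, so $M(\mathcal A_\sigma)\cong P\times\mathbb R$ with $P$ a principal $S^1$-bundle over the projectivised complement $M(\mathcal A_\sigma)/\mathbb C^{*}$; quotienting by $W_\sigma$ (still free) shows $\bar M(\mathcal A_\sigma)$ is homotopy equivalent to an open manifold of real dimension $2(k+1)-1=2k+1$, hence to a compact $(2k+1)$-manifold with corners $\bar S(\mathcal A_\sigma)$. Choosing these compatibly with the parabolic inclusions $A_\tau\hookrightarrow A_\sigma$ and gluing them over the face poset of $L$ in the pattern in which the cells of $S_L$ are attached, I would obtain a compact space $N$ homotopy equivalent to $S_L$, hence — by the $K(\pi,1)$-Conjecture — aspherical with $\pi_1(N)\cong A_L$; its top-dimensional pieces come from the $n$-simplices, so $\dim N=2n+1$.

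The crux is to arrange that $N$ is a genuine manifold. At the stratum coming from the empty simplex, $N$ looks like the cone on a $2n$-complex assembled from the links $S^{2|\sigma|}$ of that stratum in the pieces $\bar S(\mathcal A_\sigma)$, glued over $L$; realising this link as a $2n$-sphere is essentially the problem of embedding $L$ in $S^{2n}$ with a regular neighbourhood whose complement is a ball, and by Alexander duality the only obstruction to the complement being acyclic is the class in $\widetilde H^{n}(L)=H^n(L,\mathbb Z)$. Thus $H^n(L,\mathbb Z)=0$ makes $N$ an aspherical $(2n+1)$-manifold with fundamental group $A_L$, whose universal cover is then a contractible $(2n+1)$-manifold on which $A_L$ acts properly and cocompactly; so $\operatorname{actdim}(A_L)\le 2n+1$. (The same bookkeeping can be seen duality-theoretically: for the naive $(2n+2)$-dimensional thickening of $S_L$, Poincar\'e--Lefschetz duality together with $\operatorname{cd}(A_L)\le n+1$ force the obstruction to trimming one dimension into the top degree, where it is carried by $H^n(L,\mathbb Z)$.)

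The main obstacle is precisely this last step — turning the vanishing of the obstruction into the required manifold — and it is where the dimension of $L$ intervenes. For $n\ge 3$ the complement of $L$ in $S^{2n}$ has codimension $\ge 3$, hence is automatically simply connected, so an acyclic complement is a ball by the $h$-cobordism theorem and the construction goes through cleanly. For $n=2$ the complement of $L$ in $S^{4}$ need not be simply connected, and one is forced into the subtle $4$-dimensional setting: to make the complement a homotopy ball one must choose the embedding so that the $2$-cells of $L$ kill the fundamental group of the complement of its $1$-skeleton, which can be done exactly when $\pi_1(L)$ is generated by $r=\operatorname{rk}H_1(L,\mathbb Z)$ elements — this is the extra hypothesis — after which Freedman's theorem supplies the ball. (For $n\le 1$ the hypothesis $H^n(L,\mathbb Z)=0$ forces $L$ to be a forest, and the statement reduces to the known fact that the corresponding Artin groups act properly on contractible $3$-manifolds.)
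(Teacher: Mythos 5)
Your overall architecture --- local aspherical manifold models $\bar S(\mathcal A_\sigma)$ of dimension $2|\sigma|+1$ for the spherical parabolics, glued over the poset of simplices of $L$, with asphericality of the result deduced from the $K(\pi,1)$-Conjecture via the Charney--Davis/Haefliger equivalence --- matches the paper. The genuine gap is in the step where $H^n(L;\mathbb Z)=0$ is supposed to save a dimension. You propose to realise the link of the empty-simplex stratum as $S^{2n}$ by embedding $L$ in $S^{2n}$ so that the complement of its regular neighbourhood is a ball, and you assert that by Alexander duality the only obstruction is the class in $\widetilde H^{n}(L)$. This is not correct: Alexander duality gives $\widetilde H_i(S^{2n}\setminus L)\cong\widetilde H^{2n-i-1}(L)$ for \emph{every} $i$, so the complement is acyclic only if all of $\widetilde H^{*}(L)$ vanishes, which is far stronger than the hypothesis (for instance $L$ homotopy equivalent to $S^1$ with $n=2$ satisfies all hypotheses of the theorem, yet the complement would have nontrivial $H_2$). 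Moreover, an $n$-complex need not embed in $S^{2n}$ at all --- that is governed by the van Kampen obstruction, which $H^n(L;\mathbb Z)=0$ does not control --- whereas embedding in $S^{2n+1}$ is automatic and is all the paper ever uses.

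The paper's mechanism is different and sidesteps both problems: $H^n(L;\mathbb Z)=0$ (together with the $\pi_1$ condition when $n=2$; Lemmas \ref{lemma:EmbedinContractible1} and \ref{lemma:EmbedinContractible2}) is used to embed $L$ into a \emph{contractible simplicial complex $C$ of the same dimension $n$}. The poset of groups is then extended over $\mathcal{P}(C)$ with trivial (or induced) local groups on the new simplices, so there is no cone point and no sphere to produce; each piece is $M_\sigma\times\mathrm{Th}\,D_\sigma$, where $\mathrm{Th}\,D_\sigma$ is a $2(n-|\sigma|)$-disc thickening of the dual cone of $\sigma$ in $C$, and all pieces have the uniform dimension $2n+1$. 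Since $|\mathcal{P}(C)|$ is simply connected, the colimit coincides with the fundamental group, and Lemma \ref{lemma:UPandUS} transfers the contractibility of $U(A_L,|\mathcal{S}(L)|)$ supplied by the $K(\pi,1)$-Conjecture to $U(A_L,|\mathcal{P}(C)|)$, from which asphericality of the realization follows. To rescue your version you would need to replace the ``embed $L$ in $S^{2n}$ with ball complement'' step by this embedding into a contractible $n$-complex; as written, that step fails. (You also leave unexplained how the pieces $\bar S(\mathcal A_\sigma)$, of varying dimensions $2|\sigma|+1$, are fattened to a common dimension before gluing; the factors $\mathrm{Th}\,D_\sigma$, together with the disjointness statement of Lemma \ref{lemma:disjointsubmanifolds}, are what make the glued object a genuine $(2n+1)$-manifold in the paper.)
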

Charney and Davis \cite{CharneyDavis1} proved that when $L$ is a flag complex, then $A_L$ satisfies the $K(\pi, 1)$-Conjecture. Thus, the main theorem implies the following corollary.

\begin{corollary}\label{cor:flagcomplex}
If $L$ is an $n$-dimensional flag complex of dimension $n \ne 2$ and the top cohomology group $H^n(L, \mathbb{Z})$ is trivial, then the action dimension of the Artin group $A_L$ is less than or equal to $2n + 1$.
\end{corollary}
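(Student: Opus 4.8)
The plan is to realise $A_L$ as the fundamental group of a compact aspherical $(2n+1)$-manifold with boundary — equivalently, to construct a $(2n+1)$-dimensional thickening of a finite $K(A_L,1)$. Its universal cover is then a contractible $(2n+1)$-manifold on which $A_L$ acts properly (in fact freely and cocompactly), which is exactly what $\operatorname{actdim}(A_L)\le 2n+1$ asks for.

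First I would use the $K(\pi,1)$-Conjecture to fix a convenient finite model for $BA_L$. Under the conjecture the Salvetti complex $\mathrm{Sal}_L$ is aspherical; here $\mathrm{Sal}_L=\operatorname{hocolim}_{\sigma\in\mathcal S(L)}\mathrm{Sal}_\sigma$ is assembled over the poset $\mathcal S(L)$ of spherical subsets (the simplices of $L$, together with $\emptyset$) out of the Salvetti complexes $\mathrm{Sal}_\sigma$ of the finite-type Artin groups $A_\sigma$, and it has dimension $n+1$. General position alone yields the crude bound $\operatorname{actdim}(A_L)\le 2(n+1)=2n+2$: embed $\mathrm{Sal}_L$ in $\mathbb R^{2n+2}$, take a compact regular neighbourhood, and let $A_L$ act on its universal cover. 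The content of the theorem is to save this one dimension.

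To do so I would thicken the diagram defining $\mathrm{Sal}_L$ rather than $\mathrm{Sal}_L$ itself: replace each spherical piece $\mathrm{Sal}_\sigma$ by a compact manifold with corners $N_\sigma\simeq\mathrm{Sal}_\sigma$, chosen compatibly with the face maps, and glue the $N_\sigma$ along the combinatorial pattern of $L$. For a $d$-simplex $\sigma$ with $d<n$ the piece $\mathrm{Sal}_\sigma$ has dimension $d+1\le n$, so its thickening fits inside dimension $2n$ with a dimension to spare; the only stratum that can force the total dimension up to $2n+2$ is the top one, contributed by the $n$-simplices of $L$, where one must collapse away the spare dimension while respecting the gluings. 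A Mayer--Vietoris / diagram chase over $L$ identifies the obstruction to performing this collapse coherently over all top simplices with a class in $H^{n}(L;\mathbb Z)$ — heuristically the ``fundamental cocycle'' recording how the top cells of $\mathrm{Sal}_L$ are attached. The hypothesis $H^{n}(L;\mathbb Z)=0$ kills this obstruction, so the $(2n+1)$-dimensional thickening $N=\bigcup_\sigma N_\sigma$ exists: $N$ is aspherical with $\pi_1(N)=A_L$, and $\widetilde N$ is the required contractible $(2n+1)$-manifold. This is the strategy of \cite{ADOS} for right-angled Artin groups — where $\mathrm{Sal}_L$ is a union of subtori of a torus — the new features being that for a general Artin group the combinatorics of $\mathrm{Sal}_\sigma$ is more intricate and that the relevant obstruction is genuinely integral.

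I expect the main obstacle to be precisely this identification of the top-stratum obstruction with $H^{n}(L;\mathbb Z)$, together with the verification that the spherical pieces $\mathrm{Sal}_\sigma$ can be thickened and glued coherently over the face poset of $L$; this requires a concrete understanding of thickenings of finite-type Artin complexes. A second, dimension-specific difficulty is that the handle manipulations used to descend from $2n+2$ to $2n+1$ — and to realise the homological collapse geometrically — require codimension at least three, i.e.\ $n\ge 3$; the case $n=1$ is elementary, since $H^{1}(L;\mathbb Z)=0$ forces $L$ to be a forest and then $A_L$ is visibly the fundamental group of an aspherical $3$-manifold with boundary. When $n=2$ the relevant moves take place in a $5$-manifold in codimension two, where classes of $H_1(L;\mathbb Z)$ need not be carried by embedded circles in a controlled manner; the extra hypothesis that $\pi_1(L)$ is generated by $r=\operatorname{rk}H_1(L;\mathbb Z)$ elements is exactly what lets one perform the collapse through an honest finite sequence of handle moves, so that the conclusion $\operatorname{actdim}(A_L)\le 2n+1=5$ still holds.
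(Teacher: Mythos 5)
Your overall architecture --- assemble a compact aspherical $(2n+1)$-manifold from manifold models for the spherical pieces, glued over the face poset of $L$, with asphericity coming from the $K(\pi,1)$-Conjecture (known for flag $L$ by Charney--Davis) --- is the right one and matches the paper. But the step where you actually save the dimension is a genuine gap, and the role you assign to the hypothesis $H^n(L;\mathbb{Z})=0$ is not one it can play. You claim that only the top stratum threatens to force dimension $2n+2$, and that the obstruction to ``collapsing away the spare dimension'' there is a class in $H^n(L;\mathbb{Z})$ detected by Mayer--Vietoris. Neither half of this is right. First, the saving is needed at \emph{every} simplex, not just the top ones: in the glued-up manifold the piece over a $d$-simplex $\sigma$ is $M_\sigma\times\mathrm{Th}\,D_\sigma$, where $\mathrm{Th}\,D_\sigma$ is the thickened dual cone, a disk of dimension $2(n-d)$; so the local model $M_\sigma$ must have dimension $2d+1$, one below the general-position thickening of the $(d+1)$-dimensional $\mathrm{Sal}_\sigma$. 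Second, this saving is unconditional and local: for every spherical Artin group $A_\sigma$ the quotient $\bar S(\A)/W_\sigma$ of the compactified unit-sphere part of the complexified reflection arrangement complement is a compact aspherical $(2\dim\sigma+1)$-manifold with the needed boundary structure ($M_\rho\subset\partial M_\sigma$ with trivial normal bundle for $\rho<\sigma$). No class in $H^n(L;\mathbb{Z})$ enters here, and these models exist even when $H^n(L;\mathbb{Z})\neq 0$.

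Where the cohomological hypothesis actually enters is elsewhere. The asphericity argument requires the diagram to be assembled over a \emph{simply connected} poset whose basic construction is contractible. The natural such poset is $\mathcal{S}(L)=\mathcal{P}(L)\cup\{\emptyset\}$, but the cone point $\emptyset$ has dual cone the whole cone on $L'$, of dimension $n+1$, and thickening that stratum costs dimension $2n+2$ --- this, not the top simplices, is the source of the crude bound. The fix is to embed $L$ into a contractible complex $C$ of the \emph{same} dimension $n$ and assemble over $\mathcal{P}(C)$ instead; the classical PL fact that such a $C$ exists is exactly where $H^n(L;\mathbb{Z})=0$ and $n\neq 2$ are used (for $n=2$ one needs in addition that $\pi_1(L)$ is normally generated by $\mathrm{rk}\,H_1(L;\mathbb{Z})$ elements --- the difficulty is the homotopy-theoretic one of killing $\pi_1$ by attaching $2$-cells without raising the dimension, not a codimension-two handle problem in a $5$-manifold). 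Finally, asphericity of the glued manifold is not automatic from asphericity of the pieces: one needs Haefliger's theorem identifying a realization by aspherical complexes with $BG(\mathcal{P})$, together with the contractibility of $U(A_L,|\mathcal{P}(C)|)$, which for flag $L$ is supplied by Charney--Davis. Your proposal leaves both the construction of the $(2\dim\sigma+1)$-dimensional local models and this asphericity step unaddressed, and the proposed $H^n$-obstruction mechanism does not substitute for either.
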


In Section \ref{section:Spherical_Artin_Groups}, the action dimension of irreducible spherical Artin groups is studied.  In this case, the action dimension of the irreducible Artin group $A_\sigma$ is exactly $2n + 1$. 

\begin{corollary}\label{cor:irreducible}
If the Artin group $A_L$ in Theorem \ref{theorem:maintheorem} has an irreducible spherical Artin subgroup with the nerve of the same dimension, then actdim($A_L$) $= 2n + 1$.
\end{corollary}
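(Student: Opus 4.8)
The plan is to sandwich $\mathrm{actdim}(A_L)$ between $2n+1$ from above and $2n+1$ from below. The upper bound is free: the standing hypothesis is that $A_L$ is an Artin group as in Theorem~\ref{theorem:maintheorem} (so it satisfies the $K(\pi,1)$-Conjecture, $H^n(L,\mathbb{Z})=0$, and, when $n=2$, the extra condition on $\pi_1(L)$), hence Theorem~\ref{theorem:maintheorem} applies verbatim and yields $\mathrm{actdim}(A_L)\le 2n+1$. So the entire content of the corollary is the reverse inequality $\mathrm{actdim}(A_L)\ge 2n+1$.

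For the lower bound I would use monotonicity of the action dimension under passage to subgroups. If a discrete group $G$ acts properly on a contractible manifold $M$, then the restriction of the action to any subgroup $H\le G$ is again a proper action on the same $M$ (properness only becomes easier when the acting group shrinks), so $\mathrm{actdim}(H)\le\mathrm{actdim}(G)$. I apply this with $G=A_L$ and $H=A_\sigma$, the irreducible spherical Artin subgroup furnished by the hypothesis; here $\sigma$ is a top-dimensional simplex of $L$, so $\dim\sigma=n$. (If the hypothesis is read as asserting only that $L$ contains such a $\sigma$, one still obtains $A_\sigma\hookrightarrow A_L$ from van der Lek's theorem that standard parabolic subgroups of Artin groups inject, so $A_\sigma$ genuinely is a subgroup of $A_L$.) This gives $\mathrm{actdim}(A_L)\ge\mathrm{actdim}(A_\sigma)$.

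Finally I quote the computation carried out in Section~\ref{section:Spherical_Artin_Groups}: an irreducible spherical Artin group whose nerve is an $n$-simplex has action dimension exactly $2n+1$, so in particular $\mathrm{actdim}(A_\sigma)=2n+1$. Chaining the inequalities, $2n+1=\mathrm{actdim}(A_\sigma)\le\mathrm{actdim}(A_L)\le 2n+1$, which forces $\mathrm{actdim}(A_L)=2n+1$, as claimed. The only genuinely substantial ingredient is the sharp lower bound $\mathrm{actdim}(A_\sigma)\ge 2n+1$ for irreducible spherical Artin groups; this is not established in the corollary itself but in Section~\ref{section:Spherical_Artin_Groups} --- it is where an obstructor-dimension argument in the spirit of Bestvina--Kapovich--Kleiner \cite{BKK}, exhibiting a van Kampen--type embedding obstruction inside the relevant $K(\pi,1)$, has to be carried through. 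Everything else in this corollary is the two-line squeeze above.
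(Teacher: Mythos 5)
Your proposal is correct and follows essentially the same route as the paper: the upper bound comes from Theorem~\ref{theorem:maintheorem}, and the lower bound from monotonicity of the action dimension under passage to subgroups combined with the computation $\mathrm{actdim}(A_\sigma)=2n+1$ for irreducible spherical Artin groups from Section~\ref{section:Spherical_Artin_Groups}. Your added remarks (why restriction of a proper action to a subgroup stays proper, and van der Lek's theorem guaranteeing $A_\sigma\hookrightarrow A_L$) only make explicit what the paper leaves implicit.
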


\medskip
We prove the upper bound on the action dimension of the Artin group by constructing an aspherical manifold which has dimension $2n + 1$. The idea of our construction is that we consider the Artin group as the colimit group (and/or the fundamental group) of a poset of groups over the poset of simplices of $L$ or a poset whose geometric realization is the cone on the barycentric subdivision of $L$. A poset of groups has realization by aspherical complexes, which is a complex whose fundamental group is the fundamental group of the poset of groups. We will use manifolds instead of complexes in the realization to construct the manifold.

\medskip
%some explanation about what are the main points of the proof.
There are two points in our proof of the main result. Firstly, we need to construct an aspherical manifold, whose fundamental group is the Artin group (thus, the Artin group acts on its universal cover, which is a contractible manifold). For this we will need the condition that the Artin group satisfies the $K(\pi, 1)$-Conjecture. Charney and Davis \cite{CharneyDavis} proved that the $K(\pi, 1)$-Conjecture is equivalent to a conjecture regarding the contractibility of the basic construction of a poset of groups for the Artin group. Haefliger \cite{Haefliger} related, under certain conditions on the poset of groups, the contractibility of the basic construction and the sphericality of a realization by aspherical complexes of the poset of groups. We will use these two results to prove that our manifold is aspherical. Secondly, we want to minimize the dimension of our manifold. The best dimension we can have is $2n + 1$ because the dimensions of the manifolds for local groups are already $2n + 1$. We can always construct an aspherical manifold of dimension $2n + 2$ for the Artin group; however, if we want to decrease the dimension by one, we need to put some condition on the nerve $L$. In particular, the constraint on the cohomology group of $L$ implies that $L$ can be embedded into a contractible complex of the same dimension. This is a crucial point in our proof.

This paper is divided into the following sections: Section \ref{section:Poset_of_groups} deals with poset of groups and poset of spaces, most of this follows the work in \cite{BH}. We also prove some technical lemmas which will be needed later. Section \ref{section:Spherical_Artin_Groups} provides basic information about Artin groups and different dimensions of a group (geometric and action dimensions). We study the geometric and action dimensions of irreducible spherical Artin groups here. In Section \ref{section:Manifold_construction} we describe our manifold construction and prove our results. 

%\section*{Acknowledgements}
%This work is from my Ph.D thesis. I would like to thank my advisor, Michael W. Davis, for his help and support.

%SECTION 2 POSET OF GROUPS AND POSET OF SPACES
\section{Posets of groups and posets of spaces}\label{section:Poset_of_groups}
In this section, we provide some basic information about posets of groups (or simple complexes of groups) and describe an important construction which is useful in building many examples of group actions on complexes. The idea is that if an action of a group $G$ by isometries on a complex $X$ has a strict fundamental domain  $Y$ (i.e. a subcomplex of $X$ that meets each orbit in exactly one point), then one can recover $X$ and the action of $G$ directly from $Y$ and the pattern of its isotropy subgroups.

\subsection{Posets.} A \emph{poset} is a partially ordered set. Associated to any poset $\P$ one has a simplicial complex whose set of vertices is $\P$ and whose $k$-simplices are the strictly increasing sequences $\sigma_0 < \sigma_1 < \ldots < \sigma_k$ of elements of $\P$. The geometric realization of this simplicial complex will be called the \emph{geometric realization of the poset} $\P$, denoted $|\P|$. 
\begin{example}
Let $\P = \PL$ the poset of a simplicial complex $L$. $\P$ is partially ordered by inclusion. Then the simplicial complex associated to $\P$ is the barycentric subdivision of $L$.

Let $\mathcal{S} = \SL = \PL \cup \{ \emptyset \}$. The simplicial complex associated to $\S$ is the cone on the barycentric subdivision of $L$.
\end{example}

\subsection{Dual cones}
Let $\P$ be a poset. The \emph{dual cone} of $\sigma \in \P$, denoted by $D_\sigma$, is the geometric realization of the set $\P_{\ge \sigma}$ of all elements $\tau \in \P$ such that $\tau \ge \sigma$.
 
If $\P$ is the poset of non-empty simplexes of a simplicial complex $L$, then the dual cone of a simplex $\sigma \in L$ is the cone on the barycentric subdivision of the link of $\sigma$ in $L$, $Lk'_\sigma$. It is also a subcomplex of $L'$, the barycentric subdivision of $L$. Some examples of the dual cones of simplex $\sigma \in L$ are:
\begin{enumerate}
\item If $\sigma$ is top-dimensional, then $D_\sigma$ is a point.
\item If the codimension of $\sigma$ is 1, then $D_\sigma$ is a cone on a discrete set.
\item If the codimension of $\sigma$ is 2, then the link of $\sigma$ is a graph and $D_\sigma$ is the cone on the barycentric subdivision of this graph.
\item if $L$ is a PL-triangulation of a manifold then $D_\sigma$ is the dual cell of $\sigma$.
\end{enumerate}

\subsection{Stratified spaces}
This subsection contains background about stratified space, a useful generalization of simplicial complex. The discussion follows from \cite{BH}, chapter II.12.
\begin{dfn}
(Stratified sets and spaces). A \emph{stratified set} $(X, \{X^\sigma\}_{\sigma \in \P})$ consists of a set $X$ and a collection of subsets $X^\sigma$ called \emph{strata}, indexed by a set $\P$, such that
\begin{enumerate}
\item $X$ is a union of strata,
\item if $X^\sigma = X^\tau$ then $\sigma = \tau$,
\item if an intersection $X^\sigma \cap X^\tau$ of two strata is non-empty, then it is a union of strata, 
\item for each $x \in X$ there is a unique $\sigma(x) \in \P$ such that the intersection of the strata containing $x$ is $X^{\sigma(x)}$.
\end{enumerate}
\end{dfn}

The inclusion of strata gives a partial ordering on the set $\P$, namely $\tau \le \sigma$ if and only if $X^\tau \subseteq X^\sigma$. We shall often refer to $(X, \{X^\sigma\}_{\sigma \in \P})$ as "a stratified space $X$ with strata indexed by the poset $\P$" or "a stratified space over $\P$".

\begin{example}
The geometric realization $|\P|$ of a poset $\P$ has a natural stratification indexed by $\P$: the stratum $|\P_{\le \sigma}|$ indexed by $\sigma \in \P$ is the union of the $k$-simplices $\sigma_0 < \sigma_1 < \ldots < \sigma_k$ with $\sigma_k \le \sigma$.
\end{example}

\subsection{Posets of groups}
Let $\P$ be a poset. A {\em poset of groups} over a poset $\P$ is a functor $\G (\P)$ from $\P$ to the category of groups. More precisely, we have the following definition.
\begin{dfn} (Poset of Groups).
A {\em poset of groups} $\G(\P) = (G_\sigma, \psi_{\sigma \tau})$ over a poset $\P$ consists of the following data:
\begin{enumerate}
\item for each $\sigma \in \P$, a group $G_\sigma$, called the {\em local group} at $\sigma$;
\item for each $\sigma < \tau$, an injective homomorphism $\psi_{\sigma \tau} : G_\sigma \to G_\tau$ such that if $\rho < \sigma < \tau$, then $$\psi_{\rho \tau} = \psi_{\rho \sigma} \psi_{\sigma \tau}$$
\end{enumerate}
\end{dfn}

We sometimes denote the homomorphism $\psi_{\sigma \tau}$ by $\psi_a$, where $a$ stands for the morphism $\sigma < \tau$, we call $a$ an \emph{edge} and write the initial vertex: $i(a) = \sigma$ and the terminal vertex: $t(a) = \tau$.

The notion of poset of groups is similar to the notion of simple complex of groups, which is explained in \cite{BH}. Complexes of groups arise naturally from an action of a group $G$ by isometries on a complex $X$, which has a strict fundamental domain $Y$. There are local groups $G_\sigma$ for each simplex $\sigma < Y$, which are the isotropy subgroups of $G$. And whenever one cell $\sigma$ is contained in another $\tau$, we have injective map $\psi_{\tau \sigma} : G_\tau \to G_\sigma$. There are inclusion maps $\phi_\sigma: G_\sigma \to G$ which are compatible with $\psi_{\tau \sigma}$: $\phi_\sigma \psi_{\tau \sigma} = \phi_\tau$ whenever $\sigma \subset \tau$. In the notion of complex of group, if $\sigma$ is a face of $\tau$ in $Y$, then $G_\tau$ is a subgroup of $G_\sigma$. For the purpose of studying Artin groups, we want to reverse the order relation so that if $\sigma$ is a face of $\tau$, then $G_\sigma$ is a subgroup of $G_\tau$. Thus, we will use the notion of poset of groups instead of simple complex of groups as developed in \cite{BH}. 

\subsection{Classifying space $BG(\P)$ of a poset of groups}
We associate to a poset of groups $\G(\P)$ over $\P$ the small category $CG(\P)$ whose objects are the objects of $\P$ and whose morphisms are the pairs $(g, \alpha)$, where $\alpha$ is in the set of morphisms of the poset $\P$ and $g \in G_{t(\alpha)}$. We define maps $i, t : CG(\P) \to V(\P)$ by $i((g,\alpha)) = i(\alpha)$ and $t((g, \alpha)) = t(\alpha)$. The composition $(g,\alpha)(h,\beta)$ is defined if $i(\alpha) = t(\beta)$ and then it is equal to 
$$(g, \alpha)(h, \beta) = (g \psi_\alpha(h), \alpha \beta).$$
The condition on $\psi_\alpha$ in the definition of a complex of groups implies the associativity of this law of composition. The map $(g, \alpha) \to \alpha$ is a functor $p$ of $CG(\P)$ on $\P$. 

\textbf{Classifying space $BG(\P)$ of $\G(\P)$}. Recall that the nerve $N(C)$ of a small category $C$ is a simplicial complex constructed from the objects and morphisms of $C$. There is a 0-simplex of $N(C)$ for each object of $C$. There is a 1-simplex for each morphism $f: x \to y$ in $C$. In general, the set $N(C)_k$ of $k$-simplexes of the nerve consists of the $k$-tuples of composable morphisms of $C$
$$x_0 \to x_1 \to x_2 \ldots \to x_{k-1} \to x_k.$$

 The geometric realization of the nerve of the category $CG(\P)$ is denoted by $BG(\P)$ and is called the \emph{classifying space} of $G(\P)$.
\begin{example} If $\P$ has only one object $\sigma$, then $BG(\P)$ is the usual classifying space $BG_\sigma$ of the group $G_\sigma$.
\end{example}

\subsection{Groups associated to a poset of groups}\label{section:groups_cxofgroups}
Associated to any poset of groups $\G(\P)$, there are two groups: one is the colimit of the system of groups and monomorphisms $(G_\sigma, \psi_{\sigma \tau})$:
$$G := \varinjlim_{\sigma \in \P} G_\sigma $$
The group $G$ is obtained by taking the free product of the groups $G_\sigma$ and making the identifications $\psi_{\sigma \tau} (h) = h$, $\forall h \in G_\sigma$, $\forall \sigma < \tau$.

The other group is the \emph{fundamental group} $\pi_1 := \pi_1(G(\P), \sigma)$ of a poset of groups $G(\P)$ on a poset $\P$ based at a vertex $\sigma$ of $|\P|$, which is the fundamental group of the geometric realization $BG(\P)$ of the nerve of $CG(\P)$ based at $\sigma$.
A presentation of the fundamental group can be found explicitly \cite{BH}. For the purpose of this paper, we only care about the case when these two groups are the same. And it happens when $|\P|$ is simply connected \cite{Haefliger}.[Reference, Haefliger].

\subsection{The basic construction}
Let $Q$ be a group. A {\em simple morphism} $\varphi = (\varphi_\sigma)$ from $G(\P)$ to $Q$, written $\varphi : G(\P) \to Q$, is a map that associates to each $\sigma \in \P$ a homomorphism $\varphi_\sigma : G_\sigma \to Q$ such that if $ \sigma < \tau$, then $\varphi_\sigma = \varphi_\tau \psi_{\sigma \tau}$. We say that $\varphi$ is {\em injective on the local groups} if $\varphi_\sigma$ is injective for each $\sigma \in \P$.

The natural homomorphisms $\iota_\sigma : G_\sigma \to G$ from local groups to the colimit group of the poset of groups give a canonical simple morphism $\iota : G(\P) \to G$, where $\iota = (\iota_\sigma)$. In general, $\iota_\sigma$ is not injective.

The complex of groups is {\em developable} if the natural map $G_\sigma \to G$ is injective for each $\sigma \in \P$.

Suppose we have a poset of groups $\G(\P)$ over a poset $\P$, a stratified space $(Y, \{Y^\sigma\}_{\sigma \in \P} )$ indexed by the poset $\P$ and injective simple morphism $\varphi : G(\P) \to Q$ for some group $Q$, then we can build a stratified space on which the group $Q$ acts on and with strict fundamental domain $Y$. 

Since the simple morphism $\varphi : G(\P) \to Q$ is injective on the local groups, we can identify each local group $G_\sigma$ with its image $\varphi(G_\sigma)$ in $Q$. This basic construction can be defined as follows. Let
$$U(Q, Y) = Q \times Y / \sim,$$
the equivalence $\sim$ is defined by
$$(g, y) \sim (g', y') \iff y = y' \text{and } g^{-1}g' \in G_{\sigma(y)},$$
where $X^{\sigma(y)}$ is the smallest stratum containing $y$.

We write $[g, y]$ to denote the equivalence class of $(g, y)$. The group $Q$ acts by strata preserving automorphisms according to the rule
$$g' . [g, y] = [g'g, y].$$
And if we identify $Y$ with the image of $\{1\} \times Y$ in $U(Q, Y)$ (where 1 is the identity element of $Q$), then $Y$ is a strict fundamental domain for the action and the associated complex of groups is $G(\P)$.

The simplest case of the basic construction is when we take $Q = G$ the colimit group and $Y =  |\P|$. The construction gives us a complex $U(G, |\P|)$ with an action of $G$ such that the quotient $U(G, |\P|)/G$ is $|\P|$. The universal cover of the poset of group $U$ can be described as the cover of $U(G, |\P|)$ corresponding to the kernel of the map
$$\pi_1 \to G$$
 Next, we will study the basic construction in various situations. A group $G$ can be the colimit of other systems of groups formed from the system $\{ \G_\sigma \}$ in the two following settings:
\begin{enumerate}
\item Let $\P = \PL$ is the poset of (nonempty) cells of a simplicial complex $L$ and $\S = \SL = \PL \cup \{ \emptyset \}$. Then the geometric realization $|\S|$ is the cone on $|\P|$. We can form a complex of groups $\G(\S)$ from $\G(\P)$ by defining $G_{\emptyset} := \{ 1\}$ and for other $\sigma \in \S$ the local group at $\sigma$ of $\G(\S)$ is the same as the local group at $\sigma$ of $\G(\P)$.

\item Suppose that $L$ is a full subcomplex of $L'$. Define a new simple complex of groups $\G(\P(L'))$ over $\P(L')$ with groups $\{ G'_\sigma\}_{\sigma \in \P(L')}$ by
\[ G'_\sigma =
	\begin{cases}
		G_{\sigma \cap L} & \quad \text{if } \sigma \cap L \ne \emptyset\\
		\{e\} &\quad \text{if } \sigma \cap L = \emptyset\\
	\end{cases}
\]
It is easy to see that the system of groups in $\G(\P(L'))$ has the same colimit as $\G(\PL)$. 
\end{enumerate}
We will consider the basic construction with the same group $G$ and with different stratified spaces.

\begin{lemma}\label{le:colimit=fundgroup}
Suppose $\G(\P)$ is a developable poset of groups over a poset $\P$. Let $G$ be the colimit of the $G_\sigma$, $\sigma \in \P$. Assume the geometric realization of $\P$ is simply connected. Then
\begin{enumerate}
\item The fundamental group of $\G(\P)$ is $G$.
\item The universal cover of $\G(\P)$ is $U(G, |\P|)$.
\end{enumerate}
\end{lemma}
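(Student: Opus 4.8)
The plan is to prove both statements together by analyzing the standard development $U(G, |\P|)$ of the basic construction. Recall from the discussion of the basic construction that when $\G(\P)$ is developable, the simple morphism $\iota : G(\P) \to G$ to the colimit is injective on local groups, so $U(G, |\P|)$ is defined, $G$ acts on it with strict fundamental domain $|\P|$, and the associated poset of groups recovered from this action is again $\G(\P)$. The key general fact I would invoke is Haefliger's correspondence (cited in the excerpt): the fundamental group $\pi_1(\G(\P), \sigma_0)$ surjects onto $G$ via a natural map $\pi_1 \to G$, and the universal cover of the poset of groups is by definition the cover of $U(G, |\P|)$ corresponding to the kernel of this map $\pi_1 \to G$. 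So everything reduces to showing that this map is an isomorphism, equivalently that $U(G, |\P|)$ is simply connected; that immediately gives (1) $\pi_1 = G$ and (2) the universal cover of $\G(\P)$ is $U(G, |\P|)$ itself.

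First I would set up the quotient map $q : U(G, |\P|) \to |\P| = U(G, |\P|)/G$ and observe that $|\P|$ is simply connected by hypothesis. I would then use a van Kampen / covering-space-with-strata argument: cover $|\P|$ by the open stars of its vertices (or by the dual cones $D_\sigma$ together with suitable open neighborhoods), note that each such piece is contractible, and analyze the preimages in $U(G, |\P|)$. Over the open star of a vertex $\sigma$, the preimage is $G \times_{G_\sigma} (\text{star})$, i.e. a disjoint union of copies of the contractible star indexed by $G/G_\sigma$, so each component is simply connected. The combinatorics of how these pieces are glued, together with simple connectivity of $|\P|$, forces $U(G,|\P|)$ to be connected and then simply connected: any loop in $U(G,|\P|)$ projects to a loop in $|\P|$, which is null-homotopic downstairs, and one lifts the null-homotopy stratum by stratum, using that the local groups inject into $G$ to see that the loop, written as an edge-path alternating between group elements of local groups and edges of $|\P|$, reduces to the trivial element of $G$ — this is essentially the standard presentation of $\pi_1(\G(\P))$ from \cite{BH} specialized to the case where the underlying complex $|\P|$ contributes no generators or relations because it is simply connected. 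Concretely, the presentation of $\pi_1$ has generators coming from the local groups and from edges of a spanning structure of $|\P|$, with relations including the edge-relations of $|\P|$; simple connectivity of $|\P|$ kills the edge part, leaving exactly the presentation of the colimit $G$.

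The step I expect to be the main obstacle is making the "lift the null-homotopy stratum by stratum" argument fully rigorous, i.e. carefully matching the explicit presentation of $\pi_1(\G(\P),\sigma_0)$ from \cite{BH} against the colimit presentation of $G$ and checking that the natural map between them, which is evidently surjective, is also injective precisely when $|\P|$ is simply connected and the poset of groups is developable. Rather than re-derive Haefliger's theorem, I would state it as the input we are allowed to cite and reduce statements (1) and (2) to it: Haefliger's result gives that for a developable $\G(\P)$ the universal cover of $U(G,|\P|)$ is the basic construction $U(\pi_1, |\P|)$ and that $\pi_1 \to G$ has kernel acting freely; simple connectivity of $|\P|$ then makes the map $\pi_1 \to G$ an isomorphism because the obstruction to injectivity lives in $\pi_1(|\P|)$. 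Thus (1) follows, and (2) follows since the universal cover of $\G(\P)$ is by definition the cover of $U(G,|\P|)$ associated to $\ker(\pi_1 \to G)$, which is now trivial, so the universal cover is $U(G,|\P|)$ itself. I would close by remarking that developability is exactly what guarantees $U(G,|\P|)$ is a genuine (Hausdorff) stratified space on which $G$ acts with the stated strict fundamental domain, so no further hypotheses are needed.
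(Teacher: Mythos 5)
Your proposal is correct and follows essentially the same route as the paper: both reduce everything to the cited fact (Haefliger, \cite{BH}) that for a developable poset of groups with $|\P|$ simply connected the natural map $\pi_1(\G(\P)) \to G$ is an isomorphism, after which the universal cover is $U(G,|\P|)$ by the very definition of the universal cover as the cover associated to $\ker(\pi_1 \to G)$. The paper's proof is just this two-line citation; your extra sketch of why $\pi_1 \cong G$ (matching the presentation of $\pi_1(\G(\P))$ from \cite{BH} against the colimit presentation, with simple connectivity of $|\P|$ killing the edge generators) is a correct elaboration of the same input rather than a different argument.
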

\begin{proof}
If the geometric realization of $\P$ is simply connected, then as we stated in Section \ref{section:groups_cxofgroups}, the fundamental group $\pi_1$ of the poset of group is the same as the colimit $G$ of the system $\{ G_\sigma \}$. Since the two groups are the same, $U(G, |\P|)$ is the universal cover.
\end{proof}

\begin{lemma}\label{le:hom.equiv}
Suppose $L$ is a contractible simplicial complex. Then $U(G, |\PL|)$ is $G$-equivariantly homotopy equivalent to $U(G, |\SL|)$.
\end{lemma}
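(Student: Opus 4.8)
The plan is to realize $U(G,|\PL|)$ as a $G$-stable subspace of $U(G,|\SL|)$ onto which the ambient space $G$-equivariantly deformation retracts. Recall that $|\SL|$ is the cone on $|\PL|$, which is the barycentric subdivision $L'$ of $L$, the cone vertex being the point indexed by $\emptyset\in\SL$; write $|\SL|=(L'\times[0,1])/(L'\times\{0\})$ and identify $L'$ with its base $L'\times\{1\}$. Two observations about the strata are all we need. First, for $\sigma\in\PL$ the stratum of $|\SL|$ indexed by $\sigma$ equals $|\SL_{\geq\sigma}|=|\PL_{\geq\sigma}|$ (because $\emptyset\not\geq\sigma$), which is exactly the stratum of $|\PL|$ indexed by $\sigma$; hence the inclusion $j\colon|\PL|\hookrightarrow|\SL|$ of the base is strata preserving and induces a $G$-equivariant map $f=U(G,j)\colon U(G,|\PL|)\to U(G,|\SL|)$, $[g,x]\mapsto[g,j(x)]$, which identifies (homeomorphically and $G$-equivariantly) $U(G,|\PL|)$ with the $G$-invariant subspace $A:=\{[g,y]:y\in L'\times\{1\}\}$ of $U(G,|\SL|)$. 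Second, every stratum $|\SL_{\geq\sigma}|$ with $\sigma\in\PL$ is contained in the base $|\PL|$, so any point $y$ of $|\SL|$ off the base lies only in the stratum $|\SL_{\geq\emptyset}|=|\SL|$; thus $\sigma(y)=\emptyset$ and its local group is $G_\emptyset=\{1\}$. It therefore suffices to produce a $G$-equivariant strong deformation retraction of $U(G,|\SL|)$ onto $A$.

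The only input from the hypothesis is that $|\PL|=L'$ is homeomorphic to $L$, hence contractible, and is a finite simplicial complex. Consequently the inclusion of $L'$ as the base of the cone $|\SL|$ is the inclusion of a subcomplex (in particular a cofibration) and a homotopy equivalence (any map between contractible spaces is one), so by the standard fact about CW pairs it admits a strong deformation retraction: a map $\Psi\colon|\SL|\times[0,1]\to|\SL|$ with $\Psi_0=\mathrm{id}$, $\Psi_1(|\SL|)=L'\times\{1\}$, and $\Psi_u|_{L'\times\{1\}}=\mathrm{id}$ for all $u$. Define $\widetilde{\Phi}\colon(G\times|\SL|)\times[0,1]\to G\times|\SL|$ by $\widetilde{\Phi}((g,y),u)=(g,\Psi_u(y))$; this is continuous and $G$-equivariant, acting only on the $|\SL|$-coordinate. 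It respects the equivalence relation defining $U(G,|\SL|)$: if $(g,y)\sim(g',y)$, i.e.\ $g^{-1}g'\in G_{\sigma(y)}$, then either $y$ lies off the base, in which case $G_{\sigma(y)}=\{1\}$ forces $g=g'$ and there is nothing to check, or $y$ lies on the base, in which case $\Psi_u(y)=y$, so $\sigma(\Psi_u(y))=\sigma(y)$ and $(g,\Psi_u(y))\sim(g',\Psi_u(y))$. Since $[0,1]$ is compact, the quotient map times $\mathrm{id}_{[0,1]}$ is again a quotient map, so $\widetilde{\Phi}$ descends to a continuous $G$-equivariant map $\Phi\colon U(G,|\SL|)\times[0,1]\to U(G,|\SL|)$ with $\Phi_0=\mathrm{id}$, with $\Phi_1$ mapping into $A$ (because $\Psi_1$ maps into the base), and with $\Phi_u$ fixing $A$ pointwise for all $u$ (because $\Psi_u$ fixes the base pointwise). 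Thus $\Phi$ is the desired $G$-equivariant strong deformation retraction, and $f$ is a $G$-equivariant homotopy equivalence.

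The one place care is needed is the descent step in the second paragraph, and it works for a precise reason: $\Psi$ is required to fix the base of the cone pointwise, while every point created by the coning carries the trivial local group, so no finer ``strata-increasing'' compatibility of $\Psi$ with the stratification is needed. This is also why contractibility of $L$ (equivalently of $|\PL|$) — rather than some condition on the individual strata of $|\PL|$ — is exactly the right hypothesis: passing from $U(G,|\PL|)$ to $U(G,|\SL|)$ amounts to coning off each $G$-translate of the fundamental domain $|\PL|$, and contracting the single space $L'$ is all that this requires.
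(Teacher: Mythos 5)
Your proof is correct and follows the same idea as the paper's: since the local group at every point of $|\SL|$ off the base is trivial ($G_\emptyset=\{1\}$), $U(G,|\SL|)$ is obtained from $U(G,|\PL|)$ by coning off the copies of the contractible complex $|\PL|$, and one retracts back onto the base. Your write-up is more detailed than the paper's two-line sketch and, usefully, actually exhibits the \emph{$G$-equivariant} strong deformation retraction that the stated conclusion requires.
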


\begin{proof}
The geometric realization of $\PL$ is the barycentric subdivision of $L$. Thus, it is contractible. $|\SL|$ is the cone on $|\PL|$ and $U(G, |\SL|)$ is formed from $U(G, |\PL|)$ by coning off each copy of $|\PL|$. It follows that $U(G, |\PL|)$ and $U(G, |\SL|)$ are homotopy equivalent.
\end{proof}

Let $L$ and $L_1$ be simplicial complexes and $L$ is a full subcomplex of $L_1$. Let $\G(\PL)$ be a poset of groups on the poset $\PL$. Define new posets of groups $\G(\SL)$, $\G(\P(L_1))$ and $\G(\S(L_1))$ as above.
\begin{lemma}\label{lemma:UPandUS}
Suppose $L < L_1$ is a full subcomplex and $L'$ is contractible. Then $U(G,|\SL|)$ is $G$-equivariantly homotopy equivalent to $U(G, |\P(L_1)|)$.
\end{lemma}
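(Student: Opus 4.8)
The plan is to exhibit both sides as basic constructions of the \emph{same} poset of groups, described over two different posets linked by a poset map, and then to invoke homotopy invariance of the basic construction. Since $L$ is a full subcomplex of $L_1$, for every simplex $\sigma$ of $L_1$ the intersection $\sigma\cap L$ either is empty or spans a simplex of $L$; hence $\sigma\mapsto\sigma\cap L$ is an order-preserving map
$$r\colon \P(L_1)\longrightarrow \SL .$$
Comparing definitions, $\G(\P(L_1))$ is the pull-back $r^{*}\G(\SL)$: its group at $\sigma$ is $G_{\sigma\cap L}=G_{r(\sigma)}$ (with $G_{\emptyset}=\{1\}$ taking care of the case $\sigma\cap L=\emptyset$), and its structure maps are the corresponding ones of $\G(\SL)$. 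In particular $\G(\P(L_1))$ has the same colimit $G$ as $\G(\PL)$, and the identity of $G$ together with $|r|\colon|\P(L_1)|\to|\SL|$ determines a $G$-equivariant map
$$\tilde r\colon U\bigl(G,|\P(L_1)|\bigr)\longrightarrow U\bigl(G,|\SL|\bigr),\qquad [g,y]\longmapsto[g,|r|(y)].$$
This is well defined because, writing $\sigma(y)$ for the index of the smallest stratum of $|\P(L_1)|$ through $y$, the smallest stratum of $|\SL|$ through $|r|(y)$ is the one indexed by $r(\sigma(y))$, so the two equivalence relations match up.

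It remains to show that $\tilde r$ is a $G$-homotopy equivalence. I would use the ``fibre lemma'' for the basic construction: if $r\colon\P\to\Q$ is a poset map and $\G(\P)=r^{*}\G(\Q)$ (with $\G(\Q)$ developable of colimit $G$), then $U(G,|\P|)\to U(G,|\Q|)$ is a $G$-homotopy equivalence provided $|r^{-1}(\Q_{\ge\tau})|$ is contractible for every $\tau\in\Q$; this is the equivariant form of Quillen's Theorem~A, proved by a downward induction over $\Q$, comparing $U(G,-)$ over each closed stratum and its $G$-translates and gluing the resulting homotopies by Mayer--Vietoris, the step at $\tau$ being governed by the ``dual cone'' $|r^{-1}(\Q_{\ge\tau})|$. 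The verification needed here is then purely combinatorial. If $\tau=s$ is a simplex of $L$, then
$$r^{-1}(\SL_{\ge s})=\{\sigma\in\P(L_1):\sigma\cap L\ge s\}=\{\sigma\in\P(L_1):\sigma\ge s\}=\P(L_1)_{\ge s},$$
whose realization is the dual cone $D_s$ of $s$ in $L_1$, i.e.\ the cone on the barycentric subdivision of $\operatorname{Lk}(s,L_1)$, hence contractible. If $\tau=\emptyset$, then $r^{-1}(\SL_{\ge\emptyset})=\P(L_1)$ and its realization is $|\P(L_1)|=L_1'$, contractible by hypothesis. Thus every fibre is contractible, $\tilde r$ is a $G$-homotopy equivalence, and the lemma follows.

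The genuine difficulty is exactly the fibre lemma invoked above, and in particular the fact that the $G$-homotopy equivalence is \emph{not} induced by any deformation retraction of the fundamental domain $|\P(L_1)|$ onto a copy of $|\SL|$: such a retraction would have to push the strata of $|\P(L_1)|$ indexed by simplices of $L_1$ meeting $L_1\smallsetminus L$ into strata indexed by \emph{smaller} elements of $\SL$, which is incompatible with the identifications defining $U(G,-)$; so the homotopy equivalence must be assembled stratum by stratum. An equivalent route, closer to the proof of Lemma~\ref{le:hom.equiv}: that argument applied to the contractible complex $L_1$ (its only inputs being that $|\P(L_1)|=L_1'$ is contractible and that the local group at $\emptyset$ is trivial) gives $U(G,|\P(L_1)|)\simeq_G U(G,|\S(L_1)|)$; then $\SL\subseteq\S(L_1)$ is a downward-closed sub-poset over which the two posets of groups agree, so there is an inclusion $U(G,|\SL|)\hookrightarrow U(G,|\S(L_1)|)$ (split by the map induced by $r$), and one shows it is a $G$-deformation retract by the same stratum-by-stratum/dual-cone argument — which is again precisely where the contractibility of $L_1'$ and of the dual cones $D_s$ is used.
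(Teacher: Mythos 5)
Your argument is correct, but your primary route is genuinely different from the paper's, which is in fact the ``equivalent route'' you sketch at the end. The paper never constructs a direct map $U(G,|\SL|)\leftrightarrow U(G,|\P(L_1)|)$; it factors through the cone: the order-preserving retraction $\S(L_1)\to\SL$, $\sigma\mapsto\sigma\cap L$ (vertices of $L_1-L$ go to the cone point $\emptyset$), is compatible with the local groups precisely because $G'_\sigma=G_{\sigma\cap L}$ by construction, so it descends to a $G$-equivariant deformation retraction $U(G,|\S(L_1)|)\to U(G,|\SL|)$; combining this with Lemma \ref{le:hom.equiv} applied to the contractible complex $L_1$ (so the hypothesis ``$L'$ contractible'' in the statement should indeed be read as $L_1'$ contractible, as you do) gives $U(G,|\P(L_1)|)\simeq_G U(G,|\S(L_1)|)$ by coning off, and the lemma follows. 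This factorization is more elementary: once the cone point is adjoined, the retraction exists already on the fundamental domain $|\S(L_1)|$, so no stratum-by-stratum gluing is needed --- your observation that $|\P(L_1)|$ admits no such retraction onto a copy of $|\SL|$ compatible with the identifications is accurate, and enlarging to $\S(L_1)$ is exactly how the paper evades that difficulty. Your main route via the equivariant Quillen Theorem A is also sound: the identification $\G(\P(L_1))=r^*\G(\SL)$, the well-definedness of $\tilde r$, and the fibre computations ($\P(L_1)_{\ge s}$ realizes to a dual cone in $L_1$, hence is contractible; the fibre over $\emptyset$ is all of $L_1'$, contractible by hypothesis) are all correct, and they have the virtue of isolating exactly where the contractibility of $L_1$ enters (only in the fibre over $\emptyset$). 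The cost is that the ``fibre lemma'' itself is left as a black box; it is the real content of that route, whereas the paper trades it for the two softer facts above (homotopic comparable poset maps, plus coning off along a trivial local group).
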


\begin{proof}
There is a deformation retraction $r: \S(L_1) \to \SL$ which sends each vertex of $L_1 - L$ to the cone point corresponding to $\emptyset \in \SL$. Recall that for each $x \in L$, we define $\sigma(x)$ to be the minimum simplex $\sigma_0$ such that $x$ belongs to a unique (open) simplex corresponding to a chain $\sigma_0 < \sigma_1 < \ldots < \sigma_k$ in $\P$. For any $x \in \S(L_1)$, we have $G'_{\sigma(x)} = G_{\sigma(r(x))}$. It follows that $r$ induces a $G$-equivariant deformation retraction $U(G, |\S(L_1)|) \to U(G, |\S(L)|)$. By Lemma \ref{le:hom.equiv}, $U(G, |\S(L_1)|)$ is $G$-equivariantly homotopy equivalent to $U(G, |\P(L_1)|)$. The lemma follows.
\end{proof}

The following proposition is corollary 3.2.4 from \cite{Haefliger}.
\begin{proposition}\label{prop:contractibility}
Suppose $G(\P)$ is a developable poset of groups, and let $G$ be the colimit of the $G_\sigma$. Suppose $|\P|$ is simply connected. Then $BG(\P)$ is homotopy equivalent to the Eilenberg-MacLane space $BG$ if and only if $U(G,|\P|)$ is contractible.
\end{proposition}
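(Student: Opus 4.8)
The plan is to realize both sides of the equivalence through the universal cover of the classifying space $BG(\P)$, using the hypothesis that $|\P|$ is simply connected to identify $\pi_1(BG(\P))$ with the colimit group $G$. First I would recall from Section \ref{section:groups_cxofgroups} that when $|\P|$ is simply connected the fundamental group $\pi_1(G(\P))$ of the poset of groups coincides with the colimit $G$; since developability guarantees each $G_\sigma \to G$ is injective, the classifying space $BG(\P)$ is a well-behaved aspherical-over-the-strata object whose universal cover $\widetilde{BG(\P)}$ carries a free $G$-action. The key geometric point is that this universal cover is $G$-equivariantly identified with the basic construction $U(G, |\P|)$: indeed $U(G,|\P|)$ is exactly the development of the poset of groups, it is a $G$-complex with strict fundamental domain $|\P|$, and one checks that its quotient by $G$ is $BG(\P)$ up to homotopy while the stabilizer data matches, so the covering $U(G,|\P|) \to BG(\P)$ is the one corresponding to the trivial subgroup of $\pi_1 = G$. (This is the content for which Haefliger's machinery in \cite{Haefliger} — and Lemma \ref{le:colimit=fundgroup} above, in the guise that $U(G,|\P|)$ is the universal cover of $\G(\P)$ — is designed.)

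Granting that identification, the proposition becomes a statement about a single space. If $U(G,|\P|)$ is contractible, then $\widetilde{BG(\P)}$ is contractible and $BG(\P)$ is a $K(G,1)$, hence homotopy equivalent to $BG$. Conversely, if $BG(\P) \simeq BG$, then $BG(\P)$ is aspherical with fundamental group $G$, so its universal cover $\widetilde{BG(\P)} = U(G,|\P|)$ is contractible. The only subtlety in both directions is knowing that $U(G,|\P|)$ has the correct homotopy type to serve as the universal cover — that is, that it is simply connected and that its higher homotopy agrees with that of $\widetilde{BG(\P)}$ — which again is precisely what the developability hypothesis together with simple connectivity of $|\P|$ buys us (each point of the fundamental domain has an aspherical "local" contribution $BG_\sigma$, and the strata fit together as dictated by the nerve of $CG(\P)$).

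The step I expect to be the main obstacle is the careful verification of the $G$-equivariant homotopy equivalence $\widetilde{BG(\P)} \simeq U(G,|\P|)$: one must match the cell structure of the nerve $N(CG(\P))$, whose $k$-simplices are chains of composable morphisms $(g_i,\alpha_i)$, with the cell structure of $Q \times |\P| / \sim$ at $Q = G$, and track how the equivalence relation $\sim$ (identifying $(g,y)$ with $(g',y)$ when $g^{-1}g' \in G_{\sigma(y)}$) corresponds to collapsing the $G_\sigma$-orbits inside the classifying space. Since this is exactly Corollary 3.2.4 of \cite{Haefliger}, I would invoke that reference for the technical core rather than reproduce the simplicial bookkeeping, and simply record here the two short homotopy-theoretic implications above once the identification is in hand.
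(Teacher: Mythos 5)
Your proposal follows essentially the same route as the paper: both arguments delegate the technical core to Haefliger's Corollary 3.2.4 and then finish with the standard asphericity observation, using that $|\P|$ simply connected forces $\pi_1(BG(\P)) = G$. One phrasing does need to be corrected, however. You assert that $U(G,|\P|) \to BG(\P)$ is the covering corresponding to the trivial subgroup and that the quotient of $U(G,|\P|)$ by $G$ is $BG(\P)$ up to homotopy; neither is literally true, because $G$ does not act freely on $U(G,|\P|)$ --- the point $[1,y]$ has stabilizer $G_{\sigma(y)}$ --- so there is no covering map, and the honest quotient $U(G,|\P|)/G$ is $|\P|$, not $BG(\P)$. (Take $\P$ a single point with local group $\mathbb{Z}$: then $U(G,|\P|)$ is a point while $BG(\P) \simeq S^1$.) The correct statement, and the one the paper records from \cite{Haefliger}, is that $BG(\P)$ is homotopy equivalent to the \emph{homotopy} quotient, the Borel construction $U \times_G EG$ with $U = U(G,|\P|)$; passing to universal covers this gives $\widetilde{BG(\P)} \simeq U \times EG \simeq U$ (using that $U$ is simply connected under the stated hypotheses, as in Lemma \ref{le:colimit=fundgroup}), which is the identification your two short implications actually need. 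With that substitution your argument is correct and coincides with the paper's.
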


Since $|\P|$ is simply connected, the fundamental group of $BG(\P)$ is $G$. Let $EG$ be the universal cover of $BG$ and let $U = U(G, |\P|)$ be the basic construction. Haefliger proved that $BG(\P)$ is homotopy equivalent to the Borel construction, $U \times_G EG$. This proves the proposition.

%FROM CHAPTER COMPLEX OF SPACES
\subsection{Posets of spaces} \label{subsec:ComplexofSpaces}
Associate with a poset of groups is a poset of spaces. Given a simplicial complex and a collection of spaces, we can construct a simplicial complex which gives rise to a poset of groups.
%should it be Complex of spaces? or Construction of the Local Development

\textbf{Realizations with aspherical complexes.} Let $\P$ be a poset. For each $\sigma \in \P$, define $P_\sigma := |\P_{\le \sigma}|$ and call it the \emph{cone} of $\sigma$. There is a natural inclusion $i_\sigma: P_\sigma \to |\P|$ and If $a = (\sigma <\tau)$ is an edge from $\sigma$ to $\tau$, then there is a natural inclusion $i_a : P_\sigma \to P_\tau$.
\begin{remark}
If $\P = \PL$ is the poset of cells in a cell complex $L$, then $P_\sigma$ is the cell corresponding to the barycentric subdivision of $\sigma$. Similarly, if $\P = \SL$, then $P_\sigma$ is the cone on the barycentric subdivision of $\sigma$. Also, $i_a : P_\sigma \to P_\tau$ is the inclusion of one cell as a face of the other.
\end{remark}

Suppose $Y$ is a space and $\pi : Y \to |\P|$ is a continuous map. For each $\sigma \in \P$, denote $Y(\sigma) = \pi^{-1}(\sigma)$. Let $Y(P_\sigma)$ be the subspace of $Y \times P_\sigma$ of all $(y, x)$ such that $\pi(y) = i_\sigma(x)$. In other words, $Y(P_\sigma)$ is the graph of the restriction of $\pi$ to $\pi^{-1}(P_\sigma)$. There are two projection maps: 
$$\pi_\sigma : Y(P_\sigma) \to P_\sigma, \text{ defined by } (y, x) \to x,$$
and
$$Y(j_\sigma): Y(P_\sigma) \to Y, \text{ defined by } (y, x) \to y.$$
We will identify the fiber $Y(\sigma) = \pi^{-1}(\sigma)$ with the fiber of $\pi_\sigma$ above the vertex of $P_\sigma$ mapped by $i_\sigma$ on $\sigma$. For an edge $a = (\sigma < \tau)$, the inclusion map $i_a : P_\sigma \to P_\tau$ lifts to a map
$$Y(i_a) : Y(P_\sigma) \to Y(P_\tau), \text{ defined by }(x,y) \to (i_a(x), y).$$
Suppose that a section $s$ of $\pi$ over the 1-skeleton $|\P|^1$ of $|\P|$ is given. In particular each fiber $Y(\sigma)$ has a base point $s(\sigma)$. For each $\sigma \in \P$, this induces a section $s_\sigma$ of $\pi_\sigma$ over the 1-skeleton of $P_\sigma$.

\begin{dfn}
A \emph{poset of spaces} over $\P$ is a space $Y$ together with a projection map $\pi: Y \to |\P|$ and a section $s : |\P|^1 \to Y$ over the 1-skeleton of $|\P|$ so that
\begin{itemize}
\item For each $\sigma \in \P$, $Y(\sigma)$ is path-connected and there is a retraction $r_\sigma: Y(P_\sigma) \to Y(\sigma) \subset Y(P_\sigma)$ (the identification is indicated above) which is homotopic to the identity relatively to $Y(\sigma)$ and which is compatible with $s$, i.e., $r_\sigma s_\sigma(x) = s(x)$ for all $x$ in the 1-skeleton of $P_\sigma$.
\item Given an edge $a$ from $\sigma$ to $\tau$, let $\theta: Y(\sigma) \to Y(\tau)$, where $\theta (y) := r_\tau Y(i_a)(y)$. If $G_\sigma = \pi_1(Y(\sigma), s(\sigma))$ and if $a$ is an edge from $\sigma \to \tau$, then the homomorphism $\phi_a : G_\sigma \to G_\tau$ induced by $\theta$, is injective.
\end{itemize}
\end{dfn}

The groups $G_\sigma$ and the homomorphisms $\phi_a : G_{i(a)} \to G_{t(a)}$ determine a poset of groups $\G(\P)$ \cite{Haefliger}. We say that $Y \to |\P|$ is a \emph{realization} of $\G(\P)$. 

\begin{dfn}
Suppose $Y$ is a poset of CW complexes over $\P$ with associated poset of groups $\G(\P)$. Then $Y$ is an \emph{realization with aspherical complexes} of $\G(\P)$ if each $Y(\sigma)$ has the homotopy type of the classifying space $BG_\sigma$.
\end{dfn}

\begin{claim}
The classifying space $BG(\P)$ of the poset of groups $\G(\P)$ is a realization with aspherical complexes of $\G(\P)$.
\end{claim}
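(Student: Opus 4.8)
The plan is to unwind the definitions and verify the two bullet points in the definition of a poset of spaces together with the asphericity condition. First I would recall that $BG(\P)$ is built from the nerve $N(CG(\P))$ and that the functor $p : CG(\P) \to \P$ given by $(g,\alpha) \mapsto \alpha$ induces a simplicial map $\pi : BG(\P) \to |\P|$; this is the candidate projection map. For the section $s$ over the $1$-skeleton of $|\P|$, I would send a vertex $\sigma$ to the corresponding $0$-simplex of $N(CG(\P))$ (the object $\sigma$) and an edge $\sigma < \tau$ of $|\P|$ to the $1$-simplex of the nerve given by the morphism $(1,\alpha)$ where $\alpha = (\sigma < \tau)$ and $1 \in G_\tau$; one checks this is a genuine section. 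The fiber $Y(\sigma) = \pi^{-1}(\sigma)$ is the subcomplex of $BG(\P)$ spanned by chains of morphisms all of whose objects map to $\sigma$, i.e. morphisms of the form $(g,\mathrm{id}_\sigma)$ with $g \in G_\sigma$; this subcomplex is precisely $N(G_\sigma)$, so $Y(\sigma)$ has the homotopy type of $BG_\sigma$, which is exactly the asphericity condition and also shows $Y(\sigma)$ is path-connected with $\pi_1(Y(\sigma), s(\sigma)) = G_\sigma$.

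Next I would construct the retraction $r_\sigma : Y(P_\sigma) \to Y(\sigma)$. Recall $P_\sigma = |\P_{\le \sigma}|$ and $Y(P_\sigma)$ is the graph of $\pi$ restricted over $P_\sigma$, which is homeomorphic to $\pi^{-1}(P_\sigma)$; concretely this is the full subcategory nerve of $CG(\P)$ on objects $\tau \le \sigma$. I would define the retraction on the level of categories: there is a functor from $(CG(\P))_{\le \sigma}$ to $G_\sigma$ (viewed as a one-object category) sending an object $\tau \le \sigma$ to the unique object, and a morphism $(g,\alpha)$ with $i(\alpha) = \rho \le \sigma$, $t(\alpha) = \tau \le \sigma$, $g \in G_\tau$, to $\psi_{\tau\sigma}(g) \in G_\sigma$ — here I use that all the $\psi$'s commute so this is functorial, and that it restricts to the identity on the one-object subcategory $G_\sigma$. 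Taking nerves and geometric realizations gives $r_\sigma$, and the inclusion $Y(\sigma) \hookrightarrow Y(P_\sigma)$ followed by $r_\sigma$ is the identity on the nose. That $r_\sigma$ is homotopic to the identity rel $Y(\sigma)$, and compatible with $s$, follows from the standard fact that a natural transformation between functors induces a homotopy between the realizations of their nerves: the inclusion of $(CG(\P))_{\le\sigma}$'s identity functor and the composite "project to $G_\sigma$ then include back" are related by the natural transformation whose component at $\tau \le \sigma$ is the morphism $(1, (\tau<\sigma))$ (or $\mathrm{id}$ when $\tau = \sigma$), giving the required homotopy fixing $Y(\sigma)$.

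Finally, for the second bullet, given an edge $a = (\sigma < \tau)$ I would identify the map $\theta = r_\tau \circ Y(i_a) : Y(\sigma) \to Y(\tau)$ at the level of categories: $Y(i_a)$ includes $(CG(\P))_{\le\sigma}$ into $(CG(\P))_{\le\tau}$ and $r_\tau$ projects to $G_\tau$, so on $\pi_1$ the induced map $G_\sigma \to G_\tau$ is exactly $\psi_{\sigma\tau}$ by the computation above, which is injective by the definition of a poset of groups. Hence the associated poset of groups of the realization $BG(\P)$ is $\G(\P)$ itself, and since each $Y(\sigma) \simeq BG_\sigma$ it is a realization with aspherical complexes.

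The main obstacle I anticipate is bookkeeping rather than conceptual: one must be careful that $Y(P_\sigma)$ (the graph construction) is correctly identified with the sub-nerve of $CG(\P)$ over $P_\sigma$ — in particular that the $1$-skeleton-section hypotheses and the compatibility $r_\sigma s_\sigma(x) = s(x)$ hold simplex-by-simplex — and that the "natural transformation gives a homotopy" argument is applied to functors of small categories whose realizations genuinely match $Y(P_\sigma)$ and $Y(\sigma)$ with the chosen identifications. Everything else reduces to the fact, already available from \cite{Haefliger}, that the $G_\sigma$ and $\phi_a$ read off from a realization form a poset of groups.
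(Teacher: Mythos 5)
Your argument is correct, and it is worth noting that the paper itself offers no proof of this claim at all: it is stated as a bare assertion (the proof that follows in that subsection belongs to the \emph{second} claim, about the explicit model obtained by gluing the pieces $BG_\sigma \times D_\sigma$ along the dual cones). So there is nothing to compare against except Haefliger, to whom the paper implicitly defers. Your nerve-theoretic verification is a sound way to fill the gap: the fiber of $|N(p)| : BG(\P) \to |\P|$ over the vertex $\sigma$ is the nerve of the fiber category, i.e.\ of the one-object category $G_\sigma$, giving $Y(\sigma) \simeq BG_\sigma$; the functor $(CG(\P))_{\le \sigma} \to G_\sigma$, $(g,\alpha) \mapsto \psi_{t(\alpha)\sigma}(g)$, is indeed functorial by the cocycle condition $\psi_{\rho\tau} = \psi_{\rho\sigma}\psi_{\sigma\tau}$ and the composition law $(g,\alpha)(h,\beta) = (g\psi_\alpha(h),\alpha\beta)$; and the natural transformation with components $(1,\tau<\sigma)$ supplies the homotopy rel $Y(\sigma)$ since it is the identity on the subcategory $G_\sigma$. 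The identification of $\theta$ with $B\psi_{\sigma\tau}$ on fundamental groups then recovers $\G(\P)$ as the associated poset of groups. The one point you should state more carefully is the compatibility condition $r_\sigma s_\sigma(x) = s(x)$: as written in the definition this only makes sense if it is read as requiring $r_\sigma s_\sigma$ to land on the basepoint $s(\sigma)$, and your construction does satisfy this, because the section built from identity elements is carried by $r_\sigma$ onto degenerate simplices over $(1,\mathrm{id}_\sigma)$, which collapse to the unique vertex of $BG_\sigma$. With that reading made explicit, the proof is complete, and it is in fact more self-contained than what the paper provides.
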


%\begin{proof}
%Recall that the classifying space $BG(\P)$ is the geometric realization of the nerve of the associated category $CG(\P)$. If $\P$ has only one object $\sigma$ associated with group $G_\sigma$, then $BG(\P)$ is just the usual $BG_\sigma$. In general, the projection $CG(\P) \to \P$ induces a map $\pi: BG(\P) \to |\P|$. The fiber over $\sigma \in \P$ is the classifying space $BG_\sigma$. Thus, $BG(\P)$ is a realization with aspherical complexes of $\G(\P)$.
%\end{proof}

\textbf{Explicit description.} Let $\G(\P)$ be a poset of groups over a poset $\P$. For each $\sigma \in \P$, let $D_\sigma$ denote the dual cone $|\P_{\ge \sigma}|$ and choose a model for $BG_\sigma$. By using the mapping cylinder, we may assume that if $\sigma < \tau$, then $BG_\sigma$ is a subcomplex of $BG_\tau$. Thus, when $\sigma < \tau$, we have
$$BG_\sigma < BG_\tau \text{ and } D_\tau < D_\sigma$$

To form the realization of the complex of groups $\G(\P)$, we glue together the spaces $BG_\sigma \times D_\sigma$ in the following fashion: whenever $\sigma < \tau$ glue  $BG_\sigma \times D_\sigma$ to $BG_\tau \times D_\tau$ by identifying them along the common subspace $BG_\sigma \times D_\tau$. 

\begin{claim}
The resulting CW-complex $Y$ is a realization with aspherical complexes of $\G(\P)$.
\end{claim}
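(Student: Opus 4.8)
The plan is to make $Y$ into a poset of spaces over $\P$ directly from the block decomposition $Y=\bigcup_{\sigma}(BG_\sigma\times D_\sigma)$, and then to read off that each fibre is aspherical with the correct fundamental group. Write $\hat\sigma$ for the vertex of $|\P|$ corresponding to $\sigma\in\P$. First I would define $\pi\colon Y\to|\P|$ by $(b,d)\mapsto d$ on each block $BG_\sigma\times D_\sigma$; since $\sigma<\tau$ forces $BG_\sigma\subseteq BG_\tau$ and $D_\tau\subseteq D_\sigma$, the blocks are glued only along subspaces of the form $BG_\sigma\times D_\tau$, on which the two formulas agree, so $\pi$ is well defined, and it is onto because $|\P|=\bigcup_\sigma D_\sigma$. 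Because $\hat\sigma\in D_\tau$ exactly when $\tau\le\sigma$, the part of $Y$ lying over $\hat\sigma$ is the colimit of the subcomplexes $BG_\tau$, $\tau\le\sigma$, under their inclusions, which is $BG_\sigma$ itself; thus $Y(\sigma):=\pi^{-1}(\hat\sigma)=BG_\sigma$ is path-connected and aspherical with $\pi_1(Y(\sigma))=G_\sigma$. More generally, since $D_\tau\cap P_\sigma=|\P_{[\tau,\sigma]}|$ for $\tau\le\sigma$ and is empty otherwise, the part $Y(P_\sigma)$ of $Y$ over $P_\sigma=|\P_{\le\sigma}|$ is glued from the blocks $BG_\tau\times|\P_{[\tau,\sigma]}|$ with $\tau\le\sigma$.

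Next I would build the retraction $r_\sigma\colon Y(P_\sigma)\to Y(\sigma)$. Each interval complex $|\P_{[\tau,\sigma]}|$ is a cone with apex $\hat\sigma$ (the maximum of $[\tau,\sigma]$), so the radial homotopy contracting it to $\hat\sigma$ fixes $\hat\sigma$ and restricts correctly to the sub-cones $|\P_{[\tau',\sigma]}|$, $\tau<\tau'\le\sigma$; hence these homotopies patch to a deformation retraction $H\colon P_\sigma\times[0,1]\to P_\sigma$ onto $\hat\sigma$. Lifting $H$ blockwise as $\mathrm{id}_{BG_\tau}\times H|_{|\P_{[\tau,\sigma]}|}$ is consistent on the overlaps and yields a deformation retraction $\widetilde H$ of $Y(P_\sigma)$ onto $Y(\sigma)=BG_\sigma$ that fixes $Y(\sigma)$ pointwise; I set $r_\sigma:=\widetilde H(\cdot,1)$. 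This is precisely a retraction homotopic to the identity rel $Y(\sigma)$.

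For the section I would choose the mapping-cylinder models of the $BG_\sigma$ by induction over $\P$ so that they have a common basepoint $*$ with $*\in BG_\sigma\subseteq BG_\tau$ whenever $\sigma<\tau$, and put $s(\hat\sigma):=(*,\hat\sigma)$; on an edge $[\hat\sigma,\hat\tau]$ of $|\P|^1$ with $\sigma<\tau$, which lies in $D_\sigma$, let $s$ run along $t\mapsto(*,(1-t)\hat\sigma+t\hat\tau)$. This is well defined and continuous, and $r_\sigma s_\sigma=s$ on the $1$-skeleton of $P_\sigma$ because $\widetilde H$ leaves the $BG$-coordinate untouched while $s$ is constantly $*$ there. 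Finally, for an edge $a=(\sigma<\tau)$ the map $\theta=r_\tau\circ Y(i_a)\colon Y(\sigma)\to Y(\tau)$ sends a point $b\in BG_\sigma$, viewed over $\hat\sigma$ inside $Y(P_\tau)$, along $\widetilde H$ to $b\in BG_\sigma\times\{\hat\tau\}\subseteq BG_\tau=Y(\tau)$; so under the identifications $Y(\sigma)=BG_\sigma$, $Y(\tau)=BG_\tau$ it is the inclusion, $\phi_a=\psi_{\sigma\tau}$ is injective, and $\theta s(\hat\sigma)=s(\hat\tau)$. Hence $Y\to|\P|$ is a realization of $\G(\P)$, and since each $Y(\sigma)=BG_\sigma$ it is a realization with aspherical complexes.

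The content here is modest; the only real work is the bookkeeping that the blockwise definitions of $\pi$, $\widetilde H$ and $s$ agree on the repeated overlaps produced by chains $\rho<\sigma<\tau$ in $\P$, which is where the cocycle condition $\psi_{\rho\tau}=\psi_{\rho\sigma}\psi_{\sigma\tau}$ (equivalently, the compatibility of the inclusions $BG_\rho\subseteq BG_\sigma\subseteq BG_\tau$) and the nesting $D_\tau\subseteq D_\sigma\subseteq D_\rho$ of dual cones get used. If one prefers to bypass this verification, an alternative is to construct a fibrewise map over $|\P|$ from $Y$ to the classifying-space model $BG(\P)$ of the previous claim which is a homotopy equivalence on each fibre, and to transport the realization structure along it.
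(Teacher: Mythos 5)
Your proposal is correct and follows essentially the same route as the paper: define $\pi$ by projecting each block $BG_\sigma\times D_\sigma$ to its second factor, identify each fibre $Y(\sigma)$ with $BG_\sigma$, obtain the retraction $r_\sigma$ from the cone structure of $P_\sigma$ with apex $\hat\sigma$, and choose compatible basepoints for the section. You simply carry out the overlap bookkeeping (e.g.\ $D_\tau\cap P_\sigma=|\P_{[\tau,\sigma]}|$ and the blockwise lift of the radial contraction) more explicitly than the paper does.
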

\begin{proof}
First of all, we need to prove that the resulting CW-complex is a poset of spaces over $\P$. There is a projection $\pi : Y \to |\P|$ induced by the projections of $BG_\sigma \times D_\sigma$ on the second factor. The gluing pattern on the second factors of each component of $Y$ is the same as the one for making $|\P|$. Hence, the projection map is continuous.

For each $\sigma \in \P$, choose a base point $s(\sigma)$ in each Eilenberg-MacLane space $BG_\sigma$; if $\sigma < \tau$ then $s(\tau) = s(\sigma)$. We then can find a section $s: |\P|^1 \to Y$ over the 1-skeleton of $|\P|$. Put $Y(\sigma) := \pi^{-1}(\sigma) = BG_\sigma$- a path-connected space. $Y(P_\sigma)$ is the graph of the resection of $\pi$ to $\pi^{-1} (P_\sigma)$ where $P_\sigma$ is $|\P_{\le \sigma}|$. $P_\sigma$ is a cone with the cone point $\sigma$. For each $\nu < \sigma$, $BG_\nu = \pi^{-1}(\nu)$ is a subcomplex of $BG_\sigma$. Thus, there is a retraction $Y(P_\sigma) \to Y(\sigma)$, which maps each space $BG_\nu$ for each $\nu < \sigma$ to $BG_\sigma$ by injection map. And hence, $Y$ is a realization of $G(\P)$.
\end{proof}

The following lemma is from \cite{Haefliger}.
\begin{lemma}\label{lemma:equivalence}
Any realization with aspherical complexes is homotopy equivalent to the classifying space $BG(\P)$ of the poset of groups $\G(\P))$.
\end{lemma}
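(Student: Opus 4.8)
The plan is to build a homotopy equivalence between a given realization with aspherical complexes $Y\to|\P|$ and $BG(\P)$ directly, working up over $|\P|$ one cone at a time. The key observation that makes the two comparable is that, by the Claim above, $BG(\P)$ is itself a realization with aspherical complexes of $\G(\P)$; so, writing $\pi$ for either projection to $|\P|$ and $P_\sigma=|\P_{\le\sigma}|$, in each of $Y$ and $BG(\P)$ the subspace $\pi^{-1}(P_\sigma)$ deformation retracts (rel fiber) onto the fiber over the apex $\sigma$, which is a model of $BG_\sigma$, and the inclusion $\pi^{-1}(|\P_{<\sigma}|)\hookrightarrow\pi^{-1}(P_\sigma)$ induces on fundamental groups the structure homomorphisms $\psi_{\tau\sigma}$ of $\G(\P)$ (this is exactly what it means for the associated poset of groups to be $\G(\P)$).

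First I would fix a linear extension $\sigma_1,\dots,\sigma_N$ of $\P$ (so $\sigma_i<\sigma_j$ implies $i<j$) and filter $|\P|$ by $K_m=\bigcup_{i\le m}P_{\sigma_i}$; then $K_0=\emptyset$, $K_N=|\P|$, and at each step $K_m=K_{m-1}\cup P_{\sigma_m}$ with $K_{m-1}\cap P_{\sigma_m}=|\P_{<\sigma_m}|$, the base of the cone $P_{\sigma_m}=\sigma_m*|\P_{<\sigma_m}|$. Letting $Y_m$ and $X_m$ denote the preimages of $K_m$ in $Y$ and in $BG(\P)$ respectively, this writes $Y_m=Y_{m-1}\cup_{B'}C'$ and $X_m=X_{m-1}\cup_{B}C$ with $C'=Y(P_{\sigma_m})$ and $C=\pi^{-1}_{BG(\P)}(P_{\sigma_m})$ each deformation retracting onto $BG_{\sigma_m}$, and $B'$, $B$ the preimages of $|\P_{<\sigma_m}|$; all four inclusions are cofibrations.

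The induction then runs as follows: suppose we have a homotopy equivalence $f_{m-1}\colon Y_{m-1}\to X_{m-1}$ restricting to a homotopy equivalence $B'\to B$. The two composites $B'\xrightarrow{f_{m-1}}B\hookrightarrow C$ and $B'\hookrightarrow C'\xrightarrow{r}BG_{\sigma_m}\hookrightarrow C$ are maps from a CW complex into a space homotopy equivalent to the aspherical complex $BG_{\sigma_m}$ which agree on path components and induce the same homomorphism on $\pi_1$ of each component (namely the relevant $\psi_{\tau\sigma_m}$), hence are homotopic; by the homotopy extension property the homotopy equivalence $C'\simeq BG_{\sigma_m}\simeq C$ may be adjusted to a map $C'\to C$ extending $B'\to B\hookrightarrow C$, still an equivalence. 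The gluing lemma for homotopy equivalences (applicable since the inclusions are cofibrations) now yields $f_m\colon Y_m\to X_m$, and one verifies it again restricts to an equivalence over $|\P_{<\sigma_{m+1}}|$, closing the induction. Taking $m=N$ gives $Y\simeq BG(\P)$; since $BG(\P)$ and, for instance, the explicit model $\bigcup_\sigma BG_\sigma\times D_\sigma$ are themselves realizations with aspherical complexes, this shows every realization with aspherical complexes is homotopy equivalent to $BG(\P)$.

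The main obstacle is the coherence of the bookkeeping: one has to maintain through the filtration not merely a homotopy equivalence at each stage but a ``stratified'' one that restricts to equivalences over all the relevant subcomplexes, and one must choose the extending homotopies compatibly so that the gluing lemma can be reapplied at the next stage — this careful argument is the content of Haefliger's proof. Conceptually, what is happening is that any realization with aspherical complexes of $\G(\P)$, in particular $BG(\P)$, is a model for the homotopy colimit $\operatorname{hocolim}_{\sigma\in\P}BG_\sigma$ of the diagram of classifying spaces with arrows induced by the $\psi_{\sigma\tau}$ — the explicit cone-by-cone gluing of the $BG_\sigma$ is precisely a bar-type model of this homotopy colimit — so the homotopy equivalence between any two of them is forced.
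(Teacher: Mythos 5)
The paper does not actually prove this lemma: it states it and cites Haefliger, so there is no internal argument to compare yours against. Your proposal supplies a genuine proof outline, and it is the standard one --- both $Y$ and $BG(\P)$ are models of the homotopy colimit of the diagram $\sigma\mapsto BG_\sigma$, and one compares them cone by cone over a linear extension of $\P$ using the gluing lemma for homotopy equivalences along cofibrations. The skeleton is sound: the identification $K_{m-1}\cap P_{\sigma_m}=|\P_{<\sigma_m}|$ is correct, the pushout description of $Y_m$ and $X_m$ is correct, and the use of asphericity of $BG_{\sigma_m}$ to homotope the two maps out of $B'$ into agreement is the right mechanism.

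Two points deserve more than the passing mention you give them. First, the claim that two maps $B'\to C\simeq BG_{\sigma_m}$ inducing ``the same homomorphism on $\pi_1$ of each component'' are homotopic is only true once basepoints are pinned down: for unbased maps the induced homomorphism is well defined only up to conjugation, and $\pi_1(B')$ is not a single local group but the fundamental group of the poset of spaces over $\P_{<\sigma_m}$, so you must check that $f_{m-1}|_{B'}$ and the retraction-composite induce homomorphisms that agree on the nose, not merely up to an inner automorphism that could vary over $B'$. This is exactly what the section $s\colon|\P|^1\to Y$ in the definition of a poset of spaces is for, and your inductive hypothesis should explicitly require $f_{m-1}$ to be compatible with the sections. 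Second, as you note, the bare gluing lemma gives a homotopy equivalence $Y_m\to X_m$ but not automatically one that restricts to an equivalence over every $|\P_{<\sigma_{m+1}}|$; the inductive hypothesis must be the stratified statement (an equivalence over each $P_\tau$ compatible with the retractions $r_\tau$ and the sections), and one must verify the gluing construction preserves it. You flag both issues honestly and defer them to Haefliger, which is reasonable, but as written the induction does not quite close; with the strengthened hypothesis it does. Conceptually your homotopy-colimit framing is the right way to see why the result is forced.
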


%Next we will study the conditions of the poset of groups $\G(\SL)$ so that its realization with aspherical complexes is indeed aspherical.
\begin{lemma}\label{lemma:flagcomplex}
(\cite{CharneyDavis1}) Suppose that $L$ is a simplicial complex and $\G(\SL)$ is a developable poset of groups over the poset $\SL$. Let $G$ denote the colimit of $\G(\SL)$ and let $Y \to |\SL|$ be a realization with aspherical complexes. If $L$ is a flag complex, then $U(G, |\SL|)$ is contractible and $Y$ is aspherical.
\end{lemma}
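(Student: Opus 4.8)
The plan is to split the statement into a formal part, which reduces both conclusions to the single assertion that $U(G,|\SL|)$ is contractible, and a geometric part, which is the Charney--Davis curvature computation \cite{CharneyDavis1} and is where flagness of $L$ genuinely enters. For the formal part: the realization $|\SL|$ is the cone on the barycentric subdivision $L'$ of $L$, hence contractible and in particular simply connected. Since $\G(\SL)$ is developable and $|\SL|$ is simply connected, Lemma \ref{le:colimit=fundgroup} identifies the fundamental group of $\G(\SL)$ with the colimit $G$ and identifies $U(G,|\SL|)$ with the universal cover of $\G(\SL)$; in particular $U(G,|\SL|)$ is simply connected. By Lemma \ref{lemma:equivalence} the realization with aspherical complexes $Y$ is homotopy equivalent to the classifying space $BG(\SL)$, so $\pi_1(Y)\cong G$ and $Y$ is aspherical exactly when $BG(\SL)$ is homotopy equivalent to $BG$. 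By Proposition \ref{prop:contractibility}, $BG(\SL)$ is homotopy equivalent to $BG$ if and only if $U(G,|\SL|)$ is contractible. Chaining these three facts, the two conclusions of the lemma are equivalent to one another, and both follow once we show that $U(G,|\SL|)$ is contractible.

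To prove contractibility I would equip $U(G,|\SL|)$ with a piecewise Euclidean metric and show it is CAT(0). Recall that $U(G,|\SL|)$ is a union of chambers $g\cdot|\SL|$, $g\in G$, each a copy of the cone on $L'$, with $g\cdot|\SL|$ and $g'\cdot|\SL|$ identified along the stratum indexed by $\sigma\in\SL$ precisely when $g^{-1}g'\in G_\sigma$. Give the chamber $|\SL|$ the Moussong metric: replace each simplex of $L'$ by a Euclidean simplex whose shape is prescribed by the spherical simplices of $L$ and the edge labels, and take $|\SL|$ to be the metric cone on the resulting piecewise Euclidean complex. Only finitely many isometry types of cells occur, so the induced path metric on $U(G,|\SL|)$ is complete and geodesic; since $U(G,|\SL|)$ is simply connected, the Cartan--Hadamard theorem reduces contractibility to the local statement that the link of every vertex of $U(G,|\SL|)$ is CAT(1). (In the case of the Artin poset of groups, $U(G,|\SL|)$ with this metric is the modified Deligne complex of \cite{CharneyDavis1}.)

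Verifying this link condition is the main obstacle, and it is exactly the place where flagness of $L$ is used. The links of vertices interior to a chamber, or lying on strata of low codimension, are standard (round spheres, or spherical joins). The delicate case is the link of a vertex of type $\sigma$ with $\sigma$ a simplex of $L$, together with the vertices lying over the cone point of $|\SL|$: such a link is assembled by gluing the links computed inside the various chambers through the vertex, the combinatorics being governed by the poset of spherical subsets of $L$ containing $\sigma$ together with the edge labels. One checks that this piecewise spherical complex has all edges of length at least $\pi/2$ and satisfies the metric-flag, or ``no short loops'', condition exactly when $L$ has no empty simplex, i.e.\ when $L$ is a flag complex; Moussong's lemma then certifies that each such link is CAT(1). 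Hence $U(G,|\SL|)$ is simply connected and nonpositively curved, so CAT(0) by Cartan--Hadamard, and in particular contractible; this is the Charney--Davis theorem \cite{CharneyDavis1}.

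Feeding this back through Proposition \ref{prop:contractibility} yields $BG(\SL)\simeq BG$, and then Lemma \ref{lemma:equivalence} gives that $Y\simeq BG(\SL)$ is aspherical, as required. I expect essentially all of the difficulty to sit in the link computation of the previous paragraph; the rest is bookkeeping with the machinery already established.
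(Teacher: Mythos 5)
Your formal reduction is correct and uses the paper's machinery as intended: $|\SL|$ is a cone on $L'$, hence simply connected, so Lemma \ref{le:colimit=fundgroup}, Lemma \ref{lemma:equivalence} and Proposition \ref{prop:contractibility} together reduce both conclusions of the lemma to the single statement that $U(G,|\SL|)$ is contractible. The paper itself gives no argument for this lemma beyond the citation of \cite{CharneyDavis1}, so what has to be judged is your sketch of the Charney--Davis theorem, and there the argument has a genuine gap.

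The gap is the choice of metric. You equip the chambers with the Moussong metric and assert that one ``checks'' that the vertex links of $U(G,|\SL|)$ have all edges of length at least $\pi/2$ and are metric-flag precisely when $L$ is a flag complex, then invoke Moussong's lemma. For the modified Deligne complex with the Moussong metric, verifying that the vertex links are CAT(1) is exactly the step that Charney and Davis could \emph{not} carry out: in \cite{CharneyDavis1} the CAT(0)-ness of the Moussong metric on the Deligne complex is stated as a conjecture (whose general truth would yield the $K(\pi,1)$-Conjecture well beyond the flag case), and it is verified only for two-dimensional nerves. The actual proof in the flag (FC) case uses a different, \emph{cubical} metric: because every subset of a spherical subset is spherical, each interval $[aA_T,aA_{T'}]$ in the poset of vertices of the Deligne complex is a Boolean lattice whose realization is a cube; the vertex links of the resulting cubical complex are all-right piecewise spherical complexes, so Gromov's lemma (the all-right special case of Moussong's lemma) applies, and the flagness of those links is what follows from the flagness of $L$. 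Your argument becomes correct if you replace the Moussong metric by the cubical one and route the link verification through Gromov's lemma; as written, the key step rests on an unproven (indeed conjectural) claim. A smaller point: the lemma is stated for an arbitrary developable poset of groups over $\SL$, whereas your link analysis (like the Charney--Davis proof) is specific to the Artin and Coxeter posets of groups; that is clearly the intended scope, but it is worth flagging that the general statement is not what \cite{CharneyDavis1} proves.
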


%EXAMPLE OF THE GLUING PATTERN.
\begin{example}
Consider the poset of groups over the poset of simplices of a graph $L$ consisting of 4 vertices and 3 edges (see Figure \ref{figure:PosetofGroups}).

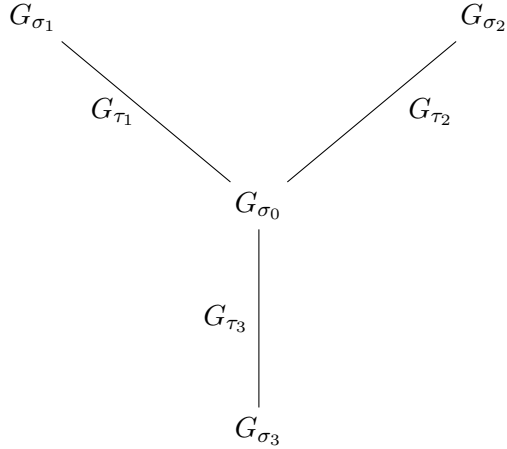
\begin{figure}
\begin{center}

\begin{tikzpicture}

\begin{scope}
    \node (A) at (3,0) {$G_{\sigma_0}$};
    \node (B) at (0,2.5) {$G_{\sigma_1}$};
    \node (C) at (6, 2.5) {$G_{\sigma_2}$};
    \node (D) at (3, -3) {$G_{\sigma_3}$};
\end{scope}

\draw (A) -- (B) node [midway, left] {$G_{\tau_1}$};
\draw (A) -- (C) node [midway, right=10pt] {$G_{\tau_2}$};
\draw (A) -- (D) node [midway, left] {$G_{\tau_3}$};
\end{tikzpicture}

\caption{A poset of groups}
\label{figure:PosetofGroups}
\end{center}
\end{figure}

The dual cones of the edges are points, the dual cones of vertices with local groups $G_{\sigma_1}$, $G_{\sigma_2}$ and $G_{\sigma_3}$ are segments; while the dual cone of the vertex with local group $G_{\sigma_0}$ is a cone on 3 points, which has the form of letter Y.

The gluing pattern is described in Figure \ref{figure:Gluingpattern}.

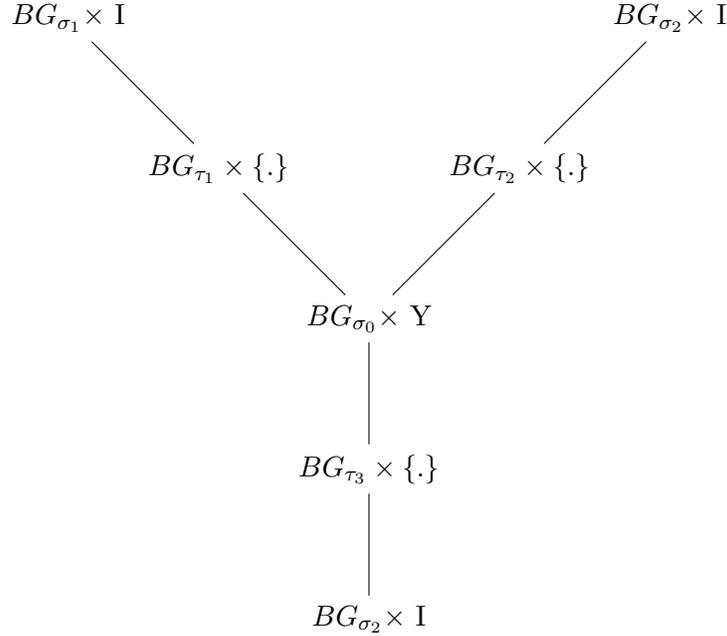
\begin{figure}
\begin{center}
\begin{tikzpicture}
\begin{scope}[every node/.style ={fill = white}]
\node (1) at (4,4) {$BG_{\sigma_0} \times$ Y};
\node (2) at (2,6) {$BG_{\tau_1} \times \{.\}$ };
\node (3) at (0,8) {$BG_{\sigma_1} \times $ I};
\node (4) at (6,6) {$BG_{\tau_2} \times \{.\}$};
\node (5) at (8,8) {$BG_{\sigma_2} \times $ I};
\node (6) at (4,2) {$BG_{\tau_3} \times \{.\}$};
\node (7) at (4,0) {$BG_{\sigma_2} \times $ I};
\end{scope}
\draw (1) -- (2) node[] {};
\draw (3) -- (2) node[] {};
\draw (1) -- (4) node[] {};
\draw (4) -- (5) node[] {};
\draw (1) -- (6) node[] {};
\draw (6) -- (7) node[] {};
\end{tikzpicture}

\caption{Gluing pattern}
\label{figure:Gluingpattern}
\end{center}
\end{figure}

\end{example}

%SECTION 3 SPHERICAL ARTIN GROUP
\section{Spherical Artin groups}\label{section:Spherical_Artin_Groups}

\subsection{Artin group as a poset of groups}
We recall in this subsection a description of a poset of Artin groups given in \cite{CharneyDavis}. Associated with a simplicial complex $L$ with edge-labels by integers $\ge 2$ are a Coxeter group $W_L$ and an Artin group $A_L$. For each simplex $\sigma$ in $L$, the Coxeter group generated by vertices of $\sigma$ with corresponding relations is a subgroup of $W_L$, and for simplices $\sigma \subset \tau$, $W_\sigma$ is a subgroup of $W_\tau$ . The same statement is also true for $A_L$. This was proved by Deligne \cite{Deligne} for Artin groups of finite type (spherical Artin groups) and by van der Lek \cite{vanderLek} for general Artin groups.

Now consider two posets $\PL$ and $\SL$ where $\PL$ is the poset of simplices in $L$ and $\SL$ is $\PL$ union with an empty simplex; the partial order in both posets is defined by inclusion. We can define two posets of groups: $\G(\PL)$ and $\G(\SL)$. The first poset $\G(\PL)$ is defined as follows: for each $\sigma$ in $\PL$ the local group $G_\sigma$ is the Artin group $A_\sigma$. For $\sigma < \tau$ there is an injection $A_\sigma \to A_\tau$ and the maps are compatible as in the definition of poset of groups. For $\G(\SL)$, the local groups for $\sigma \ne \emptyset$ are the same as the local groups in $\G(\PL)$, the local group at $\emptyset$ is the trivial group.

It is easy to see that the colimit group of both posets of groups $\G(\PL)$ and $\G(\SL)$ is the Artin group $A_L$. But the fundamental groups are different if $L$ is not simply connected. For any $L$ the fundamental group of $\G(\SL)$ is $A_L$ because $|\SL|$ is simply connected (it is a cone on the barycentric subdivision of $L$).

\subsection{The $K(\pi, 1)$-Conjecture}\label{section:kpi1}
A central question of the studying of Artin groups is the $K(\pi, 1)$-Conjecture, which was firstly stated by Brieskorn \cite{Brieskorn} for some spherical Artin groups. The conjecture was then extended to a more general setting, which according to van der Lek \cite{vanderLek}, is due to Arnold, Pham and Thom. The $K(\pi, 1)$-Conjecture states that  a certain complex (defined from complement of hyperplane arrangement) is a Eilenberg-MacLane space for the Artin group.

Charney and Davis \cite{CharneyDavis}  describe a poset of groups over the poset $\P = \SL$ of the nerve $L$ of an Artin group $A_L$. As is pointed out in \cite{CharneyDavis}, this poset of groups is developable and the fundamental group (which is also the colimit group) is the Artin group $A_L$.  It is proved in \cite{CharneyDavis} that the $K(\pi, 1)$-Conjecture for Artin groups is equivalent to the following conjecture.
\begin{conjecture}\label{conj:kpi1}
The universal cover, $U(A_L, |\SL|)$, is contractible.
\end{conjecture}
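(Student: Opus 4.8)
The plan is to establish contractibility of $U(A_L,|\SL|)$ under the standing hypothesis that $A_L$ satisfies the $K(\pi,1)$-Conjecture, using Proposition \ref{prop:contractibility} as the bridge between the basic construction and the classifying space of the poset of groups. First I would record the structural input on the poset of Artin groups $\G(\SL)$ over $\P=\SL$: it is developable with colimit group $A_L$ (the injectivity of the natural maps $A_\sigma\to A_L$, due to Deligne and van der Lek and quoted in Section \ref{section:Spherical_Artin_Groups}); the realization $|\SL|$ is the cone on the barycentric subdivision of $L$, hence simply connected; and each local group $A_\sigma$ is a spherical Artin group, so $BA_\sigma$ is aspherical and $BG(\SL)$ is a realization with aspherical complexes of $\G(\SL)$.

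These are exactly the hypotheses of Proposition \ref{prop:contractibility} with $G=A_L$, so that proposition reduces the problem to a single homotopy statement: $U(A_L,|\SL|)$ is contractible if and only if $BG(\SL)$ has the homotopy type of $BA_L$. Because $|\SL|$ is simply connected the fundamental group of $BG(\SL)$ is already $A_L$, so the reduction says precisely that contractibility of $U(A_L,|\SL|)$ is equivalent to the \emph{asphericity} of $BG(\SL)$. The remaining work is therefore to exhibit $BG(\SL)$ as aspherical, and it is here that the conjectural input is consumed rather than merely transported.

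For this I would invoke Lemma \ref{lemma:equivalence}: any two realizations with aspherical complexes of $\G(\SL)$ are homotopy equivalent, so $BG(\SL)$ is homotopy equivalent to the complex built from the complement of the hyperplane arrangement whose asphericity is the content of the original formulation of the $K(\pi,1)$-Conjecture. Granting the hypothesis that $A_L$ satisfies that conjecture, this complex is a $K(A_L,1)$, hence so is $BG(\SL)$; feeding this back through Proposition \ref{prop:contractibility} then yields that $U(A_L,|\SL|)$ is contractible, which is the assertion to be proved.

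I expect the main obstacle to be the identification in the previous paragraph: one must verify that the abstract realization with aspherical complexes attached to the poset of Artin groups is genuinely homotopy equivalent to the specific model appearing in the classical formulation of the conjecture, and not merely to some unrelated candidate for $K(A_L,1)$. This is the Charney--Davis comparison, and it is the step where the hypothesis is truly indispensable. As an unconditional check on the reduction I would note that when $L$ is a flag complex Lemma \ref{lemma:flagcomplex} already delivers contractibility of $U(A_L,|\SL|)$ outright, since flag complexes automatically satisfy the $K(\pi,1)$-Conjecture; this both confirms the strategy above and isolates exactly where the conjectural hypothesis is being used.
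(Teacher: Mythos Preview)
The statement you are attempting to prove is labeled a \emph{conjecture} in the paper, and the paper does not supply its own proof. Immediately before stating it, the paper says that Charney--Davis \cite{CharneyDavis} proved the $K(\pi,1)$-Conjecture is \emph{equivalent} to Conjecture~\ref{conj:kpi1}; immediately after, it records that Charney--Davis established the conjecture when $L$ is a flag complex. That is the full extent of the paper's treatment: the equivalence is cited, not reproved. So there is no ``paper's own proof'' to compare your proposal against.

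That said, what you have written is a correct outline of one direction of the Charney--Davis equivalence (the direction the paper actually uses later, in Section~\ref{section:newcomplexofgroups}): assuming the hyperplane-complement formulation of the $K(\pi,1)$-Conjecture, identify that complement with a realization by aspherical complexes of $\G(\SL)$ (local asphericity being Deligne's theorem for the spherical pieces), use Lemma~\ref{lemma:equivalence} to transfer asphericity to $BG(\SL)$, and then Proposition~\ref{prop:contractibility} to conclude that $U(A_L,|\SL|)$ is contractible. You correctly single out the identification step---that the hyperplane model really is a realization over $\SL$ in the sense of Subsection~\ref{subsec:ComplexofSpaces}---as the substantive content; this is precisely what Charney--Davis carry out. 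Your flag-complex sanity check via Lemma~\ref{lemma:flagcomplex} is also on target. Just be clear that your argument does not prove Conjecture~\ref{conj:kpi1} unconditionally: it rederives the implication the paper is quoting, not the open statement itself.
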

$U(A_L, |\SL|)$ is the basic construction which we described in Section  \ref{section:Poset_of_groups}. One of the main results of \cite{CharneyDavis} is a proof of Conjecture \ref{conj:kpi1} when $L$ is a flag complex (recall that a flag complex is a complex which can be determined by its 1-skeleton, i.e., a collection of $k$ vertices of the complex forms a simplex if and only if every pair of those vertices is connected by an edge).

\bigskip

\subsection{A $K(\pi, 1)$ manifold model for spherical Artin groups}\label{section:kpi1forSphericalAG}
%Some introduction here
We will study a manifold for spherical Artin groups. This manifold comes from complement of hyperplane arrangement in a sphere. The manifold and its properties may have known to experts, but we could not find any references, thus, we write up the details here. 

\textbf{A $K(\pi, 1)$ manifold model and its boundary.}
We have a natural map from an Artin group $A_L$ to a Coxter group $W_L$, by sending generators of $A_L$ to corresponding generators of $W_L$. The kernel of this map is a normal subgroup of $A_L$ and it is called the \emph{ pure Artin group}:
$$0 \to PA_L \to A_L \to W_L \to 0.$$

Suppose for now that $L = \sigma$ is a simplex ($A_L$ is spherical Artin group), thus $W_L = W_\sigma$ is a finite Coxeter group. This Coxeter group $W_\sigma$ acts as real linear reflection group on $\mathbb{R}^{d+1}$, where $d$ is dimension of $\sigma$. The reflections in $W_\sigma$ are the orthogonal reflections across the hyperplanes $H$. Complexification gives a linear action on $\mathbb{C}^{d+1} = \mathbb{R}^{d+1} \otimes \mathbb{C}$. Denote by $\mathcal{A}$ the (finite) set of linear hyperplanes, $\mathcal{A} = \{ H \otimes \mathbb{C} \}$. A \emph{subspace} $G$ of $\mathcal{A}$ is an intersection of hyperplanes in $\mathcal{A}$. It is a fixed point set of some subgroup of the Coxeter group. Denote by $L(\mathcal{A})$ the poset of all subspaces of $\mathcal{A}$. The union of all subspaces of $\mathcal{A}$ is the set of points with nontrivial isotropy group. This set can be viewed as stratified space with strata indexed by the poset $\P = \P (\sigma)$. Define $M(\mathcal{A})$, the \emph{complement} of $\mathcal{A}$, by 
$$M(\mathcal{A}) = \mathbb{C}^{d+1} - \bigcup\limits_{H \in \mathcal{A}} H. $$ 
Also denote $\SA = \MA/\mathbb{R}_+ \cong \MA \cap S^{2d+1}$. Since
$$\MA \cong \SA \times \mathbb{R}_+,$$
we see that $\SA$ is homotopy equivalent to $\MA$. Deligne \cite{Deligne} proved that $\MA$ is homotopy equivalent to $K(PA_\sigma, 1)$, and therefore, so is $\SA$. Our goal is to define a bordification of $\MA$ by a manifold with faces $\M$, with its faces indexed by $L(\mathcal{A})$, as well as a compactification $\BS$ of $\SA$, so that the interior of $M$ is $\MA$ and the interior of $\BS$ is $\SA$.

\medskip

Given a subspace $G \in L(\mathcal{A})$, define a subarrangement $\A_G$ of $\A$, and a hyperplane arrangement $\A^G$ in $G$:
$$\A_G := \{ H \in \A | G \le H \}$$
$$\A^G := \{ G \cap H | H \in \A - \A_G \}.$$
Denote by $G^\perp$ the perpendicular complement of $G$ in $\mathbb{C}^{d+1}$, i.e., $\mathbb{C}^{d+1} = G \oplus G^\perp$, then $\A_G$ is naturally identified with a hyperplane arrangement in $G^\perp$, namely,
$$\A_{G^\perp} = \{ H \cap G^\perp | H \in \A_G \}$$

We will blowup $\mathbb{C}^{d+1}$ along all subspaces of $\A$ starting with the subspaces of smallest dimension. For a discussion about blowing up, see \cite{Davis} and \cite{Janich}. This will result in a sequence of manifolds with faces, $\mathbb{C}^{d+1} = M(-1) < M(0) < \ldots < M(d)$, where $M(i)$ is obtained from $M(i-1)$ by blowing up the subspaces of dimension $2i$. The first step of the blowing up procedure is blowing up the smallest subspace $F = \{ 0 \}$. And the result is 
$$M(0) = S^{2d +1} \times [0, \infty).$$
The face $\d_FM(0)$ corresponding to $F = \{ 0\}$ is $S^{2d +1} \times 0$. Similarly, each subspace $G$ of $\A$ is blown up to a manifold with boundary $G(0)$ with $\d_FG(0) = S^{2d +1} \cap G$. Suppose, by induction, that $M(i -1)$ has been defined and for each subspace $G$ of dimension $\ge 2i$, submanifold with faces $G(i-1)$ also has been defined. Let $G$ be a subspace of dimension $2i$. Blowup $M(i-1)$ along $G(i-1)$ to get $M(i-1) \odot G(i-1)$. In the process, we introduce a new face indexed by $G$, $\d_GM(i-1) \odot G(i-1)$. Similarly, we can blow up all other $2i$-dimensional subspaces to obtain $M(i)$. Then we can define for each subspace $G$ of dimension $\ge 2(i+1)$: $G(i) := M(i) \cap G$. We continue this process to $i = d$ and obtain $\M := M(d)$. Finally, define
$$\BS := \d_0 M(d),$$
where $\d_0()$ denotes the blowup corresponding to a minimum face $F = \{ 0 \}$.
\medskip

Next, we will study closely the boundary of $\M$ and $\BS$. Let $\rho < \sigma$ be a simplex in $\sigma$, there is the Coxeter group $W_\rho < W_\sigma$. Let $G$ be the fixed subspace of $W_\rho$. When we blow up all the subspaces contained in $G$ (which are the elements of $A^G$), we get a submanifold $\bar{M}(A^G)$ in an appropriate blowup $N$ of $\mathbb{C}^{d+1}$. The normal bundle of $\bar{M}(A^G)$ in $N$ is trivial with fiber $\mathbb{C}^{c(\rho)}$, where $c(\rho)$ means the codimension of $\rho$ in $\sigma$. Consider $G^\perp$, the hyperplane arrangement in $G^\perp$ is the reflection arrangement of $W_\rho$. If we continue to blow up $N$ along subspaces for the reflection arrangement of $W_\rho$, we obtain a part of $\M$, which is 
$$ \bar{M}(\A_{G^\perp}) \times \bar{M}(\A^G).$$
A part of the boundary of this submanifold is 
$$\d_G\M = \bar{S}(\A_{G^\perp}) \times \bar{M}(\A^G)$$
and a corresponding face in $\BS$
$$\d_G\BS = \bar{S}(\A_{G^\perp}) \times \bar{S}(\A^G) $$

The Coxeter group $W_\sigma$ acts on $\M$ and $\BS$. Denote the manifold $\BS / W_\sigma$ by $M_\sigma$. It follows that part of the boundary of $M_\sigma$ is $(\bar{S}(\A_{G^\perp})/W_\rho)  \times \bar{S}(\A^G)$, where $ \bar{S}(\A_{G^\perp})/W_\rho = M_\rho$. We summarize the above argument into the following proposition.
\begin{proposition}\label{prop:localmanifold}
Suppose $A_\sigma$ is a spherical Artin group with the nerve $\sigma$ of dimension $d$. Then there is a $(2d+1)-$dimensional manifold with boundary $M_{\sigma}$ with the following properties.
\begin{enumerate}
\item $M_{\sigma}$ is homotopy equivalent to $BA_{\sigma}$ (i.e., $M_\sigma$ is aspherical and its fundamental group is isomorphic to $A_\sigma$).
\item if $\rho < \sigma$ is a proper face, then $M_\rho \subset \d M_\sigma$. Moreover, the normal bundle of $M_\rho$ in $\d M_\sigma$ is trivial.
\end{enumerate}
\end{proposition}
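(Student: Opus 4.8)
The manifold is essentially already in hand: as constructed above, $M_\sigma$ is the quotient $\BS/W_\sigma$ of the blown-up sphere $\BS=\d_0\M$ (where $\M$ is the iterated real blowup of $\mathbb{C}^{d+1}$ along the subspaces of the complexified reflection arrangement $\A$ of the finite Coxeter group $W_\sigma$, performed in order of increasing dimension) by the linear $W_\sigma$-action. The plan is to read the three assertions of the proposition off the geometry of this blowup: that $M_\sigma$ is a $(2d+1)$-dimensional manifold with faces, that it is aspherical with $\pi_1\cong A_\sigma$, and that each $M_\rho$ embeds in $\d M_\sigma$ with trivial normal bundle. The point on which everything turns is that $W_\sigma$ acts \emph{freely} on $\BS$.

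\textbf{Step 1: dimension and freeness.} A real (oriented) blowup along a submanifold preserves dimensions, so $\BS$ is a $(2d+1)$-dimensional manifold with faces. To prove that the $W_\sigma$-action on $\BS$ is free I would induct on the rank $d+1$ of $W_\sigma$, the base case being $\mathbb{Z}/2$ acting antipodally on $S^1$. Let $x\in\BS$ and let $\bar x\in S^{2d+1}$ be its image under the $W_\sigma$-equivariant blowdown. If $\bar x\in\SA$ then $\mathrm{Stab}_{W_\sigma}(\bar x)=\{1\}$, since $\SA$ is precisely the locus of trivial $W_\sigma$-isotropy, hence $\mathrm{Stab}_{W_\sigma}(x)=\{1\}$ as well. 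Otherwise let $G$ be the smallest subspace of $\A$ through $\bar x$; since $\bar x\neq 0$ and $W_\sigma$ acts essentially, the parabolic $W_\rho:=\mathrm{Stab}_{W_\sigma}(\bar x)$ (the pointwise stabilizer of $G$) is a proper, hence lower-rank, subgroup. Now $\mathrm{Stab}_{W_\sigma}(x)\subseteq\mathrm{Stab}_{W_\sigma}(\bar x)=W_\rho$, and $x$ lies in the fiber of the blowdown over $\bar x$, which — because blowing up is local — is the spherical blowup $\bar{S}(\A_{G^\perp})$ attached to the reflection arrangement of $W_\rho$ on $G^\perp$, with $W_\rho$ acting through that arrangement; by the inductive hypothesis this action is free, so $\mathrm{Stab}_{W_\sigma}(x)$ is trivial. (It is essential that the blowup is the real, spherical one — keeping the sphere of normal directions rather than the complex projective space — for otherwise rotations of finite order would fix points of the exceptional set.) Consequently $M_\sigma=\BS/W_\sigma$ is a $(2d+1)$-dimensional manifold with faces and $\BS\to M_\sigma$ is a regular covering with deck group $W_\sigma$.

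\textbf{Step 2: homotopy type and boundary faces.} The inclusion of the interior $\SA\hookrightarrow\BS$ is a $W_\sigma$-equivariant homotopy equivalence (push in along a $W_\sigma$-invariant collar of the faces), so $M_\sigma=\BS/W_\sigma$ is homotopy equivalent to $\SA/W_\sigma$; and since $\MA\cong\SA\times\mathbb{R}_+$ with $W_\sigma$ acting trivially on the second factor, $\SA/W_\sigma$ is homotopy equivalent to $\MA/W_\sigma$. By the $K(\pi,1)$-theorem for spherical Artin groups (Brieskorn \cite{Brieskorn}, Deligne \cite{Deligne}), $\MA/W_\sigma$ is a $K(A_\sigma,1)$; therefore $M_\sigma$ is aspherical with $\pi_1(M_\sigma)\cong A_\sigma$, which is assertion (1). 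For assertion (2), let $\rho<\sigma$ be a proper face and $G$ the fixed subspace of $W_\rho$. As worked out above, the face $\d_G\BS=\bar{S}(\A_{G^\perp})\times\bar{S}(\A^G)$ descends under the free action to the face $\bigl(\bar{S}(\A_{G^\perp})/W_\rho\bigr)\times\bar{S}(\A^G)=M_\rho\times\bar{S}(\A^G)$ of $M_\sigma$, where $M_\rho=\bar{S}(\A_{G^\perp})/W_\rho$ is exactly the manifold this construction assigns to the spherical Artin group $A_\rho$, the reflection arrangement of $W_\rho$ on $G^\perp$ being $\A_{G^\perp}$. Choosing a point $y_0$ in the interior of the manifold-with-faces $\bar{S}(\A^G)$, the slice $M_\rho\times\{y_0\}$ is an embedded copy of $M_\rho$ lying in the interior of this face, hence in $\d M_\sigma$; a neighborhood of it in $\d M_\sigma$ has the product form $M_\rho\times U$ with $U$ a ball around $y_0$, so its normal bundle in $\d M_\sigma$ is trivial. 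This gives (2).

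\textbf{Main obstacle.} Everything above rests on the local analysis of the iterated real blowup carried out before the statement: that blowing up the subspaces of $\A$ in order of increasing dimension yields a manifold with faces on which $W_\sigma$ acts freely, and whose face over a subspace $G=\mathrm{Fix}(W_\rho)$ splits as the product $\bar{S}(\A_{G^\perp})\times\bar{S}(\A^G)$. Granting these two geometric facts, the homotopy-theoretic identification of $M_\sigma$ and the triviality of the normal bundles are essentially formal. So the substantive work — and where I would concentrate — is the careful bookkeeping of this blowup: checking simultaneously that it resolves the non-freeness of the $W_\sigma$-action and that it exhibits the asserted product structure along every face.
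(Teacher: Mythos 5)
Your proposal follows essentially the same route as the paper: the paper's ``proof'' of this proposition is precisely the preceding construction of $\M$ and $\BS=\d_0\M$ by iterated real blowup, the identification of the face over $G=\mathrm{Fix}(W_\rho)$ as $\bar{S}(\A_{G^\perp})\times\bar{S}(\A^G)$, and the definition $M_\sigma=\BS/W_\sigma$ with $M_\rho=\bar{S}(\A_{G^\perp})/W_\rho$, which is exactly what you carry out. Your Step~1 (the inductive verification that $W_\sigma$ acts freely on $\BS$, so that the quotient is a manifold) is a worthwhile addition that the paper leaves implicit.
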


The following lemma is needed for our construction in Section \ref{section:Manifold_construction}.
\begin{lemma}\label{lemma:disjointsubmanifolds}
The boundary of $M_\sigma$ contains a collection of disjoint manifolds $M_\rho$ for every $\rho < \sigma$.
\end{lemma}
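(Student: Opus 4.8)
The plan is to promote the containment statement of Proposition \ref{prop:localmanifold}(2) from individual faces $\rho < \sigma$ to the whole collection $\{M_\rho : \rho < \sigma\}$ simultaneously, the only extra content being the disjointness of the copies. First I would recall how the faces of $\M$ (and hence of $\BS$ and of $M_\sigma = \BS/W_\sigma$) arose in the blowup construction: each face $\d_G\M$ is indexed by a subspace $G \in L(\A)$, and distinct subspaces give distinct faces because the blowups are performed one subspace at a time, introducing a new, separately-indexed face at each stage. Under $W_\sigma$ the subspaces $G$ that are fixed sets of the parabolic $W_\rho$ are permuted, and the quotient face $\d_G\BS/W_\sigma$ is exactly the copy of $M_\rho$ produced by the identification $\bar S(\A_{G^\perp})/W_\rho = M_\rho$ described just before the proposition. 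So the copies of $M_\rho$ for the various proper faces $\rho$ sit inside $\partial M_\sigma$ as the images of the faces $\d_G\BS$ for $G$ ranging over (one representative from each $W_\sigma$-orbit of) the corresponding subspace strata.

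The key point is then disjointness. I would argue that two such submanifolds $M_\rho$ and $M_{\rho'}$, coming from subspaces $G$ and $G'$, can only meet where the corresponding faces $\d_G\M$ and $\d_{G'}\M$ meet, and in a blowup the face $\d_G$ is a nonempty face of the face $\d_{G'}$ only when $G < G'$ (equivalently $G' \subset G$ as subspaces) — i.e. faces of different strata intersect only along the face corresponding to a third, smaller stratum. Concretely, from the product descriptions $\d_G\BS = \bar S(\A_{G^\perp}) \times \bar S(\A^G)$, an intersection $\d_G\BS \cap \d_{G'}\BS$ lies in a deeper face $\d_{G''}\BS$ with $G'' $ strictly below both; passing to the $W_\sigma$-quotient, any point of $M_\rho \cap M_{\rho'}$ would lie in $M_{\rho''}$ for a face $\rho'' $ strictly containing both $\rho$ and $\rho'$, hence neither $M_\rho$ nor $M_{\rho'}$ would be a face of $\partial M_\sigma$ of the expected (maximal, codimension-one) type — but each $M_\rho$ with $\rho$ a codimension-one face of $\sigma$ is a genuine boundary component, and for higher-codimension $\rho$ we pick the copy lying in the closure but take the collection to consist of pairwise disjoint open neighborhoods inside their respective faces. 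More cleanly: restrict attention to a regular neighborhood. Since by Proposition \ref{prop:localmanifold}(2) each $M_\rho$ has trivial normal bundle in $\partial M_\sigma$, each has an open tubular neighborhood $M_\rho \times \mathbb{R}^{k_\rho}$; shrinking these neighborhoods, the submanifolds $M_\rho$ may be isotoped off one another, so that after this adjustment the collection $\{M_\rho\}_{\rho < \sigma}$ is literally disjoint in $\partial M_\sigma$. This is the statement we need for the gluing in Section \ref{section:Manifold_construction}, where each $M_\rho$ must receive an independent copy of the dual-cone factor.

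I expect the main obstacle to be bookkeeping rather than conceptual: one must be careful that the copies of $M_\rho$ indexed by different faces really are distinct submanifolds of $\partial M_\sigma$ (not accidentally identified under the $W_\sigma$-action) and that the nested-faces phenomenon — $M_\rho$ and $M_{\rho'}$ sharing the deeper face $M_{\rho''}$ — is handled either by passing to disjoint tubular neighborhoods (using the triviality of the normal bundles from Proposition \ref{prop:localmanifold}) or by observing that for the construction in Section \ref{section:Manifold_construction} it suffices to realize the $M_\rho$ as disjoint after such a small isotopy. A secondary point to check is that the indexing by $W_\sigma$-orbits of subspaces matches the indexing by proper faces $\rho < \sigma$ bijectively, which follows from the standard correspondence between parabolic subgroups $W_\rho$ of the finite Coxeter group $W_\sigma$ and the subspaces of its reflection arrangement, together with the fact that $W_\sigma$ acts transitively on the subspaces conjugate to a fixed $\mathrm{Fix}(W_\rho)$.
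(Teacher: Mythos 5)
Your proposal is correct and follows essentially the same route as the paper: identify the copies of $M_\rho$ with the blowup faces indexed by the fixed subspaces of the parabolics $W_\rho$, observe that non-nested subspaces give disjoint faces once the deeper strata have been blown up, and use the triviality of the normal bundles to take parallel disjoint copies whenever two faces are identified under $W_\sigma$ or are nested. The paper organizes this slightly more cleanly by first reducing to codimension-one faces (whose fixed subspaces are complex lines meeting only at the origin, hence separated by the blowup of the origin) and recursing on Proposition \ref{prop:localmanifold}, but the substance is the same.
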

\begin{proof}
From Proposition \ref{prop:localmanifold} we know that $M_\rho$ is contained in the boundary of $M_\sigma$. We will prove that we can choose $M_\rho$ such that it has no intersection with other submanifolds.

Since the statement of Proposition \ref{prop:localmanifold} is true for any simplex $\sigma$, it follows that if $\rho$ is simplex of codimension 1 of $\sigma$, then the boundary of $M_\rho$ contains submanifolds $M_\pi$ for any $\pi < \rho < \sigma$. Thus, it is enough to choose a disjoint collection of $M_\rho$ of $M_\sigma$ where $\rho$ is simplex of codimension 1 (one manifold $M_\rho$ can have multiple disjoint copies, one for each $\pi < \rho$, in the collection).

Let $\rho_1$ and $\rho_2$ be two simplexes of $\sigma$ of codimenion 1. We will study the manifold $M_{\rho_1}$ and $M_{\rho_2}$ in the hyperplane complement and in its quotient by the Coxeter group $W_\sigma$. Let $G_1$ and $G_2$ be the fixed subspaces of $W_{\rho_1}$ and $W_{\rho_2}$. $G_1$ and $G_2$ are planes in $\mathbb{C}^{d + 1}$ (2-dimensional vector spaces). They intersect at the origin (the fixed subspace of $W_\sigma$). After blowing up the origin, these two subspaces have no intersection. Thus, the two manifolds $\bar{S}(\A_{G_1})$ and $\bar{S}(\A_{G_2})$ are totally disjoint because they sit in the sphere bundle of each $G_1$ and $G_2$. If the group $W_\sigma$ does not map $G_1$ to $G_2$, then these 2 manifolds remain disjoint in the quotient of the hyperplane complement by the group $W_\sigma$. If the group $W_\sigma$ does map $G_1$ to $G_2$, then the two manifold are the same in the quotient. But since the normal bundle of this manifold in the boundary of $M_\sigma$ is trivial, we can take as many copies of $M_{\rho_1}$ (or $M_{\rho_2}$) as we want. This proves the lemma.
\end{proof}

\bigskip
%%%%SECTION

\subsection{Geometric and action dimensions}
Let $\Gamma$ be a discrete torsion-free group. We have the following notions of dimensions for $\Gamma$.
\begin{dfn} \label{dfn:gdim}
The {\em geometric dimension}, gdim$(\Gamma)$, of $\Gamma$ is the minimal $n$ such that $\Gamma$ admits a properly discontinuous action on a contractible $n$-complex, in other words, $\Gamma$ has a finite $n$-dimensional classifying space $B\Gamma$.
\end{dfn}
\begin{dfn} \label{dfn:actdim}
The {\em action dimension}, actdim$(\Gamma)$, is the smallest integer $n$ such that $\Gamma$ admits a properly discontinuous action on a contractible $n$-manifold. If no such $n$ exists, then actdim$(\Gamma) = \infty$.
\end{dfn}
Notice that if $\Gamma$ admits a proper action on a contractible manifold $M$ and if $\Gamma$ is torsion-free, then the action is free and $M/\Gamma$ is a finite dimensional model for $B\Gamma$. A theorem of Stallings \cite{Stallings} says that every $n$-complex is homotopy equivalent to a complex that embeds in $\mathbb{R}^{2n}$. It follows that for a torsion free group $\Gamma$ we have 
$$\text{actdim}(\Gamma) \le  2 \text{gdim}(\Gamma),$$
We could see that by immersing an $n$-complex (a $K(\Gamma, 1)$ model) into $\mathbb{R}^{2n}$ and taking its regular neighborhood.

For a RAAG,  its action dimension is studied in \cite{ADOS}. In particular, the authors prove:
\begin{theorem}\label{theorem:RAAGs}
Suppose $L$ is a $k$-dimensional flag complex and $A_L$ is the RAAG. Then, if $H_k(L;\mathbb{Z}/2) \ne 0$, then actdim$(A_L) = 2k + 2$. If $H_k(L;\mathbb{Z}/2) = 0$ and $k \ne 2$, then actdim$(A_L) \le 2k + 1$.
\end{theorem}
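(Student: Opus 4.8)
Proof proposal for Theorem \ref{theorem:RAAGs} (the statement cited from \cite{ADOS}).

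The plan is to reduce the problem to a statement about manifolds built from the local data supplied by the poset of groups $\G(\SL)$ and then to glue these local manifolds together, exactly in the spirit of the main construction of this paper but in the RAAG special case where every spherical parabolic $A_\sigma$ is a free abelian group $\mathbb{Z}^{|\sigma|+1}$. First I would record the two separate regimes. For the equality statement when $H_k(L;\mathbb{Z}/2)\neq 0$, one must produce a lower bound actdim $\ge 2k+2$ and a matching upper bound. The lower bound is the hard direction: here I would invoke the obstructor-dimension machinery of \cite{BKK}, building an $(2k+1)$-dimensional obstructor inside the boundary of a would-be contractible manifold from a nonzero $\mathbb{Z}/2$-fundamental class of $L$ together with the standard ``$n$-torus has obstructor dimension $n$'' input; the nonvanishing of $H_k(L;\mathbb{Z}/2)$ is precisely what lets the mod-$2$ van Kampen obstruction of the relevant complex be nonzero, so that no embedding into $\mathbb{R}^{2k+1}$ (hence no action on a contractible $(2k+1)$-manifold) exists. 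The matching upper bound actdim $\le 2k+2$ is the ``generic'' construction: one always has a $K(A_L,1)$ of dimension $k+1$ (the Salvetti complex), so Stallings' theorem gives actdim $\le 2(k+1)=2k+2$.

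For the inequality statement when $H_k(L;\mathbb{Z}/2)=0$ and $k\neq 2$, the approach is the dimension-reduction trick that this paper generalizes. The key step is: the condition $H_k(L;\mathbb{Z}/2)=0$ (with $k\neq 2$, so that no $\pi_1$ obstruction intervenes) implies that $L$ embeds as a full subcomplex of a contractible $k$-dimensional complex $L_1$. Then I would build, over the poset $\P(L_1)$, the modified poset of groups $\G(\P(L_1))$ whose local group at $\tau$ is $A_{\tau\cap L}$ (trivial if $\tau\cap L=\emptyset$), which has the same colimit $A_L$; by Lemma \ref{lemma:UPandUS} the basic construction over $|\P(L_1)|$ is $G$-equivariantly homotopy equivalent to that over $|\SL|$, and by Lemma \ref{lemma:flagcomplex} the latter is contractible since $L$ is flag. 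Finally I would realize $\G(\P(L_1))$ by \emph{manifolds} rather than complexes: over each simplex $\tau$ of $L_1$ put the product of the local spherical-Artin manifold $M_{\tau\cap L}$ (of dimension $2\dim(\tau\cap L)+1$, from Proposition \ref{prop:localmanifold}, here an honest thickened torus) with the dual cell $D_\tau$ of $\tau$ in the \emph{contractible} complex $L_1$, and glue them along boundaries using the disjointness afforded by Lemma \ref{lemma:disjointsubmanifolds}. Because $L_1$ is contractible and $k$-dimensional, the dual cells have the right codimensions to make the total dimension $2k+1$; the resulting manifold is aspherical by Proposition \ref{prop:contractibility} (its universal cover is the contractible basic construction), and its fundamental group is $A_L$, giving actdim$(A_L)\le 2k+1$.

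The main obstacle is the boundary-matching in the gluing. When one replaces the classifying spaces $BG_\sigma$ in the ``explicit description'' by manifolds-with-boundary $M_\sigma$ and multiplies by dual cells of $L_1$, the pieces $M_{\sigma}\times D_\sigma$ must be glued along $M_{\rho}\times D_\sigma$ (for $\rho<\sigma$) inside $\partial M_\sigma\times D_\sigma$, and for the result to be a manifold of the advertised dimension $2k+1$ — rather than $2k+2$ — one needs each $M_\rho$ to sit in $\partial M_\sigma$ with trivial normal bundle and the various $M_\rho$ for different faces $\rho$ to be pushed off each other; this is exactly what Lemma \ref{lemma:disjointsubmanifolds} and the triviality clause of Proposition \ref{prop:localmanifold}(2) are for, but verifying that after all the gluings the local manifold structures are compatible (collar neighborhoods match, corners are straightened) is the delicate bookkeeping. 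The role of the hypothesis $k\neq 2$ is to guarantee that the embedding $L\hookrightarrow L_1$ into a contractible $k$-complex exists at all: for $k\ge 3$ one can cone off $1$-cycles and kill higher homology by attaching cells of dimension $\le k$ without raising the dimension, whereas for $k=2$ one additionally needs control of $\pi_1(L)$ (the generation hypothesis in Theorem \ref{theorem:maintheorem}), and for $k=2$ in the RAAG theorem as stated this case is simply excluded.
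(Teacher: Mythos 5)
This is a theorem quoted from \cite{ADOS}; the paper does not reprove it but sketches the machinery, namely the octahedralization $OL$ as an obstructor complex together with the squeeze $\operatorname{embdim} OL + 1 \ge \operatorname{actdim} A_L \ge \operatorname{vkdim} OL + 2$. Your treatment of the equality case is in the right spirit (the content of \cite{ADOS} is that $\operatorname{vkdim} OL = 2k$ exactly when $H_k(L;\mathbb{Z}/2)\ne 0$, which with the Stallings bound $2\,\mathrm{gdim}(A_L)=2k+2$ gives $\operatorname{actdim} A_L = 2k+2$), though you never name the complex whose van Kampen obstruction is being computed, and that obstruction lives in dimension $2k$ (it obstructs embedding the $k$-complex $OL$ in $\mathbb{R}^{2k}$); the lower bound $2k+2$ then comes from the \cite{BKK} inequality $\operatorname{actdim}\ge \operatorname{vkdim}+2$, not from a failure of embedding in $\mathbb{R}^{2k+1}$.

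The genuine gap is in your upper bound. Your key step asserts that $H_k(L;\mathbb{Z}/2)=0$ (with $k\ne 2$) implies $L$ embeds in a contractible $k$-dimensional complex. By Lemma \ref{lemma:EmbedinContractible1} the necessary and sufficient condition for such an embedding is $H^k(L;\mathbb{Z})=0$, and this is strictly stronger: since $H_k(L;\mathbb{Z})$ is free for a $k$-complex, universal coefficients shows $H_k(L;\mathbb{Z}/2)=0$ forces $H_k(L;\mathbb{Z})=0$ and forbids $2$-torsion in $H_{k-1}(L;\mathbb{Z})$, but says nothing about odd torsion, whereas $H^k(L;\mathbb{Z})\cong \operatorname{Ext}(H_{k-1}(L;\mathbb{Z}),\mathbb{Z})$ detects all torsion in $H_{k-1}$. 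A flag triangulation of a Moore space $M(\mathbb{Z}/3,\,k-1)$ (say $k=3$) satisfies $H_k(L;\mathbb{Z}/2)=0$ yet has $H^k(L;\mathbb{Z})=\mathbb{Z}/3\ne 0$, and the exact sequence of the pair $(C,L)$ with $C$ contractible and $k$-dimensional shows no such embedding can exist. So your construction only establishes the theorem under the stronger hypothesis $H^k(L;\mathbb{Z})=0$; that is, it reproves Corollary \ref{cor:flagcomplex}, not the quoted statement. The route consistent with the paper's stated inequalities is instead: when $H_k(L;\mathbb{Z}/2)=0$ the top van Kampen obstruction of $OL$ vanishes, and for $k\ne 2$ this obstruction is complete, so $OL$ PL-embeds in $\mathbb{R}^{2k}$ and $\operatorname{actdim} A_L \le \operatorname{embdim} OL + 1 \le 2k+1$. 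Accordingly, the exclusion of $k=2$ reflects the incompleteness of the van Kampen obstruction for $2$-complexes in $\mathbb{R}^4$, not (as you suggest) an obstruction to embedding $L$ into a contractible complex.
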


%%IDEA OF THE PROOF OF THE ABOVE THEOREM???
Let us discuss some of the concepts and ideas of the proof of the above theorem; for more details, see \cite{ADOS} . To prove the theorem, the authors introduce the so-called \emph{octahedralization}, $OL$ of a simplicial complex $L$. If $V$ is the set of vertices of $L$, then the set of vertices of $OL$ is $V \times \{ \pm 1 \}$; and if a collection of vertices $\{ v_0, \ldots, v_k\}$ forms a simplex in $L$, then any subset of the form $\{ (v_0, \epsilon_0), \ldots, (v_k, \epsilon_k) \}$ of $V \times \{ \pm 1 \}$ spans a $k$-simplex. The octahedralization $OL$ of the nerve $L$ of a RAAG $A_L$ sits inside the boundary $\partial_\infty A_L$ of $A_L$ and plays a role of obstructor complex (\cite{BKK}) for the RAAG. There are two dimensions of $OL$: one is the \emph{embedding dimension}, which is the minimum dimension $m$ in which $OL$ can embed piecewise linearly in $S^m$. The other is the \emph{van Kampen dimension} - the maximum $m$ such that some particular cohomology class of the configuration space of $OL$ is nonzero. The action dimension of $A_L$ is related to the embedding dimension and the van Kampen dimension of $OL$ in the right-angled case by the following inequalities:
$$\text{embdim} OL + 1 \ge \text{actdim} A_L \ge \text{vkdim} OL + 2.$$
The main result of \cite{ADOS} concerns vkdim$OL$ for a flag complex $L$ ( recall that for a RAAG, the nerve $L$ is a flag complex). In the case when the top homology $H_k$ of $L$ with $\mathbb{Z}/2$-coefficients is nonzero, the van Kampen dimension of $OL$ is $2k$; consequently, the action dimension of $A_L$ is $2k + 2$.

%SPHERICAL ARTIN GROUPS = finite type

\subsection{Geometric dimension of spherical Artin groups}
For spherical Artin group $A_\sigma$, we have the following theorem about its geometric dimension.
\begin{proposition}
The geometric dimension of a spherical Artin group $A_\sigma$ is dim($\sigma$) + 1.
\end{proposition}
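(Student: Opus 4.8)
The plan is to establish the two bounds $\mathrm{gdim}(A_\sigma)\le \dim(\sigma)+1$ and $\mathrm{gdim}(A_\sigma)\ge \dim(\sigma)+1$ separately; write $d=\dim(\sigma)$, so $W_\sigma$ is a finite Coxeter group with $d+1$ generators acting as an essential reflection group on $\mathbb{R}^{d+1}$, and $\A$, $\MA$, $M_\sigma$ are as in Section~\ref{section:kpi1forSphericalAG}.

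For the upper bound, the first step is to observe that $W_\sigma$ acts freely on $\MA$, since every point of $\MA$ avoids all reflection hyperplanes and so has trivial stabilizer; thus $\MA\to\MA/W_\sigma$ is a regular covering with deck group $W_\sigma$. Since $\MA\simeq K(PA_\sigma,1)$ (Deligne), the extension $1\to PA_\sigma\to A_\sigma\to W_\sigma\to 1$ shows that $\MA/W_\sigma$ is a $K(A_\sigma,1)$. Finally, $\MA/W_\sigma$ is homotopy equivalent to a finite CW complex of dimension $d+1$, namely the Salvetti complex of $(W_\sigma,S)$, which has cells only in dimensions $0,\dots,|S|=d+1$; this furnishes a finite $(d+1)$-dimensional classifying space, so $\mathrm{gdim}(A_\sigma)\le d+1$.

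For the lower bound it is enough to show $\mathrm{cd}(A_\sigma)\ge d+1$, because $\mathrm{cd}\le\mathrm{gdim}$ always. Since $PA_\sigma$ is a subgroup of $A_\sigma$ we have $\mathrm{cd}(A_\sigma)\ge\mathrm{cd}(PA_\sigma)$, so it suffices to exhibit a nonzero cohomology group of $PA_\sigma$ in degree $d+1$. By Deligne, $H^*(PA_\sigma;\mathbb{Z})\cong H^*(\MA;\mathbb{Z})$, and the latter is the Orlik--Solomon algebra of the reflection arrangement $\A$; since $\A$ is essential it has rank $d+1$, so this algebra is nonzero in degree $d+1$ --- in fact, by Brieskorn's formula \cite{Brieskorn}, the Poincar\'e polynomial of $\MA$ is $\prod_{i=1}^{d+1}(1+m_i t)$, where $m_1,\dots,m_{d+1}$ are the exponents of $W_\sigma$, so $H^{d+1}(\MA;\mathbb{Z})$ has rank $m_1\cdots m_{d+1}>0$. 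Hence $\mathrm{cd}(PA_\sigma)\ge d+1$, and therefore $\mathrm{cd}(A_\sigma)\ge d+1$ and $\mathrm{gdim}(A_\sigma)\ge d+1$. Combined with the upper bound, $\mathrm{gdim}(A_\sigma)=d+1=\dim(\sigma)+1$.

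The two genuinely external inputs are Deligne's theorem (so that $\MA$, hence $\MA/W_\sigma$, is aspherical) and the Orlik--Solomon/Brieskorn description of $H^*(\MA)$, in particular its nonvanishing in the top degree $d+1$; everything else is formal. The point that needs the most care --- and what I would regard as the main obstacle --- is the upper bound: one must exhibit a \emph{finite} $(d+1)$-dimensional $K(A_\sigma,1)$, not merely a length-$(d+1)$ free resolution, and the identification of $\MA/W_\sigma$ with the homotopy type of the Salvetti complex is what does this work. (With a finite $(d+1)$-dimensional $BA_\sigma$ in hand, no appeal to the Eilenberg--Ganea theorem is needed; alternatively one could invoke $\mathrm{gdim}=\mathrm{cd}$ once $\mathrm{cd}(A_\sigma)=d+1\ge 3$ and treat $d=0$, where $A_\sigma\cong\mathbb{Z}$, and $d=1$, where $A_\sigma$ is a dihedral Artin group with aspherical presentation $2$-complex, by hand.)
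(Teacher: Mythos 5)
Your proof is correct and follows essentially the same route as the paper: the upper bound comes from Deligne's asphericity theorem producing a finite $(d+1)$-dimensional $K(A_\sigma,1)$ (the paper cites Deligne and Charney--Davis where you make the Salvetti complex explicit), and the lower bound comes from nonvanishing of cohomology in degree $d+1$ together with $\mathrm{cd}\le\mathrm{gdim}$. The only cosmetic difference is that you detect the top cohomology on the pure Artin group via the Orlik--Solomon/Brieskorn description of $H^*(M(\mathcal{A}))$ and then use monotonicity of $\mathrm{cd}$ under passage to subgroups, whereas the paper cites the nonvanishing of $H^{d+1}(A_\sigma;\mathbb{Z})$ directly.
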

\begin{proof}
It was proven in\cite{Deligne} that spherical Artin groups admit finite dimensional classifying space. Alternatively, we can see that spherical Artin groups has the nerve $L$ a simplex, which is obviously a flag complex. Then, by the result of \cite{CharneyDavis}, these groups satisfy the $K(\pi,1)$-Conjecture. Charney and Davis proved that there is a finite Eilenberg-MacLane space of dimension $\sigma$ + 1 for the Artin group. Thus, gdim($A_\sigma$) $\le$ dim($\sigma$) + 1.

On the other hand, the cohomology of $A_\sigma$ with $\mathbb{Z}$-coefficients is nonzero in the top dimension, which is equal to dim($\sigma$) + 1 (\cite{Davis}). Since the cohomology dimension of a group is a lower bound for geometric dimension, the theorem follows.
\end{proof}

%IRREDUCIBLE ARTIN GROUPS

\subsection{Action dimension of irreducible spherical Artin groups}\label{subs:actdim_spherical}
%the list of irreducible Coxeter groups
The finite (irreducible) Coxeter groups were classified by Coxeter in 1935, see \cite{Coxeter}. They consist of three one-parameter families of increasing rank $A_n, B_n, D_n$, one one-parameter family of dimension two, $I_2(p)$ and six exceptional groups $H_3, H_4, F_4, E_6, E_7$ and $E_8$. Will denote the Artin groups of type $A_n, B_n, \ldots$ by $A(A_n), A(B_n), \ldots$.

%PUT SOME PICTURE HERE??????

Since the geometric dimension of a spherical Artin group $A_\sigma$ is dim($\sigma$) + 1, the action dimension of $A_\sigma$ is less than or equal to 2dim($\sigma$) + 2. In fact, the action dimension of irreducible spherical Artin group is 2dim($\sigma$) + 1.

%A K(pi, 1) manifold model for spherical Artin groups.

\begin{theorem}
Let $\Gamma$ be the Artin group corresponding to Coxeter groups of type $A_n, B_n$ or $D_n$. Then the action dimension of $\Gamma$ is $2n - 1$.
\end{theorem}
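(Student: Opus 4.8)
The plan is to establish the equality by proving the two inequalities $\text{actdim}(\Gamma) \le 2n-1$ and $\text{actdim}(\Gamma) \ge 2n-1$ separately, where $\Gamma$ is the Artin group of type $A_n$, $B_n$, or $D_n$, whose nerve $\sigma$ is an $(n-1)$-simplex. For the upper bound, the idea is to apply Theorem \ref{theorem:maintheorem} (or rather the manifold construction underlying it, specialized to the case where $L = \sigma$ is a simplex): by Proposition \ref{prop:localmanifold}, there is already a $(2\dim\sigma + 1) = (2(n-1)+1) = (2n-1)$-dimensional aspherical manifold $M_\sigma$ with $\pi_1(M_\sigma) \cong A_\sigma = \Gamma$; passing to its universal cover gives a contractible $(2n-1)$-manifold on which $\Gamma$ acts properly (freely, since $\Gamma$ is torsion-free), so $\text{actdim}(\Gamma) \le 2n-1$. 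Note that here no hypothesis on $H^{n-1}(\sigma,\mathbb{Z})$ is needed because the top cohomology of a simplex vanishes automatically, and the $K(\pi,1)$-Conjecture holds since a simplex is a flag complex (Charney--Davis \cite{CharneyDavis}).

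For the lower bound, the plan is to use the obstructor-dimension machinery of Bestvina--Kapovich--Kleiner \cite{BKK} together with the structure of these particular Artin groups. The key observation is that $A(A_n)$, $A(B_n)$, and $A(D_n)$ all contain (or are closely related to) braid groups, whose obstructor dimensions were computed in \cite{BKK}: the $n$-strand braid group $B_n$ has obstructor dimension $2n-3$, giving $\text{actdim}(B_n) \ge 2n-3$, and in fact one shows $\text{actdim}(B_n) = 2n-3$. Since $A(A_{n}) \cong B_{n+1}$ (the braid group on $n+1$ strands), this gives the type-$A$ case directly: $\text{actdim}(A(A_n)) = 2(n+1)-3 = 2n-1$. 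For types $B_n$ and $D_n$ one uses that these Artin groups contain finite-index or retract copies related to braid groups of the appropriate rank — $A(B_n)$ is isomorphic to a subgroup of finite index in (or an iterated extension involving) the braid group, and similarly for $D_n$ — so that the obstructor/action dimension lower bound transfers. Alternatively, one can exhibit a suitable free abelian subgroup or an explicit obstructor complex (an $(n-1)$-fold join of appropriately many points, i.e., a copy of the relevant van Kampen obstructor) sitting at infinity.

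I would carry out the steps in this order: first record the isomorphisms/embeddings relating $A(A_n), A(B_n), A(D_n)$ to braid groups (citing the classical references); second, invoke the upper bound from Proposition \ref{prop:localmanifold}; third, invoke the braid-group obstructor-dimension computation of \cite{BKK} and transfer it along the group relations to get the matching lower bound.

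The main obstacle I anticipate is the lower bound for types $B_n$ and $D_n$: unlike type $A$, these are not literally braid groups, so one must either track an explicit obstructor complex at infinity through the relevant (semidirect product or finite-index) structure, or verify that obstructor dimension is monotone under the particular subgroup relations in play (it behaves well under passing to subgroups, but one must be careful that the relevant copy is ``large enough'' in the coarse-geometric sense to carry the full obstructor). Getting the constant exactly right — $2n-1$ rather than $2n-2$ or $2n$ — requires matching the rank of the braid group to the rank $n$ of the Coxeter diagram precisely in each of the three families, which is a small bookkeeping point but the place where an off-by-one error is easiest to make.
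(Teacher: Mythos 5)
Your upper bound coincides with the paper's: the hyperplane-arrangement manifold $M_\sigma$ of Proposition \ref{prop:localmanifold}, with $\sigma$ the $(n-1)$-simplex, is an aspherical $(2n-1)$-manifold with fundamental group $\Gamma$, and its universal cover carries the required proper action. Your type-$A_n$ lower bound also matches: $A(A_n)$ is the braid group on $n+1$ strands, whose obstructor dimension is $2n-1$ by \cite{BKK}, and $\operatorname{obdim}$ bounds $\operatorname{actdim}$ from below.

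The genuine gap is the lower bound for types $B_n$ and $D_n$, which you flag as ``the main obstacle'' but do not resolve, and the candidate strategies you list do not suffice. Subgroup monotonicity applied to the visible braid subgroup $\mathcal{B}_n$ of $A(B_n)$ or $A(D_n)$ only gives $\operatorname{actdim} \ge 2n-3$, two short of the target; and while $A(B_n)$ does embed with finite index in $\mathcal{B}_{n+1}$ (which would rescue type $B$ via quasi-isometry invariance of $\operatorname{obdim}$), no such finite-index embedding is available for type $D_n$. The paper's actual argument uses the Crisp--Paris decompositions $A(B_n) \cong F_n \rtimes \mathcal{B}_n$ and $A(D_n) \cong F_{n-1} \rtimes \mathcal{B}_n$ together with Corollary 27 of \cite{BKK}: for $G = H \rtimes Q$ with $H$, $Q$ finitely generated and $H$ weakly convex (here $H$ is free, hence hyperbolic), one has $\operatorname{obdim} G \ge \operatorname{obdim} H + \operatorname{obdim} Q$. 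Since $\operatorname{obdim}(F_k) = 2$ and $\operatorname{obdim}(\mathcal{B}_n) = 2n-3$, this yields $\operatorname{obdim} \ge 2 + (2n-3) = 2n-1$. This superadditivity under semidirect products is the missing idea; without it the free-group factor contributes nothing and the count comes out two low.
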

\begin{proof}
The number $n$ in the notation of $A_n, B_n$ or $D_n$ is the number of generators of each Artin group, which is also the geometric dimension of the (spherical) Artin group. For the Artin group corresponding to the Coxeter groups of type $A_n, B_n$ and  $D_n$ we have an aspherical manifold model (from the associated hyperplane arrangement, see Section \ref{section:kpi1forSphericalAG}) of dimension $2n - 1$. Thus,  actdim($\Gamma$)  $\le 2n - 1$. We need to prove the opposite direction, i.e., actdim($\Gamma$)  $\ge 2n - 1$.

\textbf{Type $A_n$}. In \cite{BKK} the authors defined a notion of obstructor dimension for a group, obdim($\Gamma$). It was pointed out in that paper that the obstructor dimension is a lower bound for action dimension, i.e., for any group $\Gamma$, we have actdim($\Gamma$) $\ge$ obdim($\Gamma$). The authors also proved that obdim$(\Gamma)$ = $2n - 1$ for $\Gamma$ is the braid group on (n+1) strands. It follows that actdim$(\Gamma) \ge 2n - 1$.

\textbf{Type $B_n$}. There is a decomposition of the Artin group of type $B_n$, $A(B_n)$, as a semidirect product of free group and braid group on $n$ strands $\mathcal{B}_n$ (\cite{CrispParis}, proposition 2.1):
$$A(B_n) \cong F_n \rtimes \mathcal{B}_n.$$
Corollary 27 from \cite{BKK} states that if groups $H$ and $Q$ are finitely generated, $H$ is weakly convex and if $G = H \rtimes Q$, then obdim$G$ $\ge$ obdim$H$ + obdim$Q$. Examples of weakly convex groups are hyperbolic, CAT(0) or semi hyperbolic groups. Since both $F_n$ and $\mathcal{B}_n$ are finitely generated and $F_n$ is hyperbolic, we have
$$\text{obdim}(A(B_n)) \ge \text{obdim}(F_n) + \text{obdim}(\mathcal{B}_n).$$ 
The obstructor dimension of a free group is 2, obdim($\mathcal{B}_n$) = $2(n-1) - 1 = 2n - 3$. Thus,
$$\text{obdim}(A(B_n)) \ge 2 + 2n - 3 = 2n -1.$$
So,
$$ \text{actdim}(A(B_n)) \ge 2n - 1.$$

\textbf{Type $D_n$}. The Artin group of type $D_n$ can also be decomposed as a semidirect product of free group and braid group on $n$ strands for $n \ge 4$ (\cite{CrispParis}, proposition 2.3)
$$A(D_n) \cong F_{n-1} \rtimes \mathcal{B}_n.$$
By the same argument as above, we have actdim$(A(D_n)) \ge 2n - 1$.
\end{proof}

For groups $A(I_2(p))$, we have actdim($A(I_2(p))$) = 3. This is from \cite{Gordon} based on previous work \cite{Droms}. In \cite{Gordon}, the 3-manifold Artin groups with 1-dimensional nerves were classified. The 3-manifold Artin groups are the ones which have the nerve is a tree or a triangle with all labels 2.

\begin{theorem}
%\begin{itemize}
%\item actdim($A(G_2)$) = actdim($A(H_2)$) = actdim($A(I_n)$ = 3. This is from \cite{Gordon}.
We have the following equalities:
\begin{itemize}
\item actdim($A(H_3)$) = 5,
\item actdim($A(F_4)$) = 7,
\item actdim($A(H_4)$) = 7,
\item actdim($A(E_6)$) = 11,
\item actdim($A(E_7)$) = 13,
\item actdim($A(E_8)$) = 15.
\end{itemize}
\end{theorem}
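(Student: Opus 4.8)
The plan is to sandwich $\mathrm{actdim}$ of each of these groups between $2n-1$ from above and $2n-1$ from below, where $n$ denotes the number of standard generators (so $n=3$ for $H_3$; $n=4$ for $F_4$ and $H_4$; $n=6,7,8$ for $E_6,E_7,E_8$), in exact parallel with the argument just given for types $A_n$, $B_n$, $D_n$. For the upper bound I would observe that each group in the list is an irreducible spherical Artin group $A_\sigma$ whose nerve $\sigma$ is a simplex of dimension $d=n-1$, so Proposition \ref{prop:localmanifold} produces a $(2d+1)=(2n-1)$-dimensional aspherical manifold with fundamental group $A_\sigma$; its universal cover is then a contractible $(2n-1)$-manifold carrying a proper action of $A_\sigma$, whence $\mathrm{actdim}(A_\sigma)\le 2n-1$, i.e. the bounds $5$, $7$, $7$, $11$, $13$, $15$ respectively. (This also follows from Theorem \ref{theorem:maintheorem}, a simplex being contractible.)

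For the lower bound I would use $\mathrm{actdim}(\Gamma)\ge\mathrm{obdim}(\Gamma)$ and the fact that $\mathrm{obdim}$ does not increase under passage to subgroups \cite{BKK}, so it suffices to locate inside each group a subgroup of obstructor dimension $2n-1$. The tool is Corollary 27 of \cite{BKK}: if $G=N\rtimes Q$ with $N$ and $Q$ finitely generated and $N$ weakly convex (in particular free), then $\mathrm{obdim}(G)\ge\mathrm{obdim}(N)+\mathrm{obdim}(Q)$; recall $\mathrm{obdim}(F_k)=2$ for $k\ge 2$, $\mathrm{obdim}(\mathbb{Z})=1$, the proof of the preceding theorem in fact gives $\mathrm{obdim}(A(B_m))=\mathrm{obdim}(A(D_m))=2m-1$, and $\mathrm{obdim}(A(I_2(5)))=3$ (upper bound from the recalled equality $\mathrm{actdim}(A(I_2(p)))=3$, lower bound because $A(I_2(5))$ is commensurable with a group of the form $F\rtimes\mathbb{Z}$, to which Corollary 27 applies).

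The decisive ingredient, and the step I expect to be the real obstacle, is an analogue for the exceptional types of the Crisp--Paris splittings of $A(B_m)$ and $A(D_m)$. Deleting a suitable leaf of the Coxeter diagram carries $H_3\rightsquigarrow I_2(5)$, $H_4\rightsquigarrow H_3$, $F_4\rightsquigarrow B_3$, $E_6\rightsquigarrow D_5$, $E_7\rightsquigarrow D_6$, $E_8\rightsquigarrow D_7$, and I would aim to prove that correspondingly $A_\sigma\cong F_k\rtimes A_{\sigma'}$, with $F_k$ a finitely generated free group ($k\ge 2$) and $A_{\sigma'}$ the standard parabolic subgroup on the smaller diagram $\sigma'$. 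Granting these splittings, Corollary 27 of \cite{BKK} and an induction on the rank give
\[
\mathrm{obdim}(A(H_3))\ge 2+3=5,\qquad \mathrm{obdim}(A(F_4))\ge 2+5=7,\qquad \mathrm{obdim}(A(H_4))\ge 2+5=7,
\]
\[
\mathrm{obdim}(A(E_6))\ge 2+9=11,\qquad \mathrm{obdim}(A(E_7))\ge 2+11=13,\qquad \mathrm{obdim}(A(E_8))\ge 2+13=15,
\]
and combined with the upper bounds these are the six claimed equalities.

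To establish the splittings I would follow the Crisp--Paris method geometrically: exhibit a locally trivial fibration of the discriminant complement of the reflection arrangement of $\sigma$ over that of $\sigma'$, with fiber a finitely punctured plane, and read off a short exact sequence $1\to F_k\to A_\sigma\to A_{\sigma'}\to 1$ split by the parabolic inclusion; alternatively one can try to write down an explicit retraction $A_\sigma\to A_{\sigma'}$ with free kernel normally generated by the deleted generator, where some care is needed because that generator is attached to $\sigma'$ by an odd label and so cannot simply be sent to the identity. If a clean splitting turns out to be unavailable for some diagram, the fallback is to compute $\mathrm{obdim}(A_\sigma)$ directly from the stratification at infinity of the reflection arrangement, imitating the Bestvina--Kapovich--Kleiner computation for the braid arrangement \cite{BKK}.
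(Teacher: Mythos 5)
Your upper bound is fine and agrees with the paper's: each of these groups is an irreducible spherical Artin group whose nerve is a simplex of dimension $d$ (so $d=2$ for $H_3$, etc.), and the blown-up hyperplane-arrangement model of Proposition \ref{prop:localmanifold} gives an aspherical $(2d+1)$-manifold, hence $\mathrm{actdim}\le 2d+1$. The problem is the lower bound. Your entire argument funnels through the existence of splittings $A_\sigma\cong F_k\rtimes A_{\sigma'}$ for the exceptional types, and you correctly identify this as ``the real obstacle'' --- but it is not a technical obstacle to be smoothed over, it is a missing theorem. The Crisp--Paris decompositions you are trying to imitate are special to types $B$ and $D$: they arise from identifying $A(B_n)$ and $A(D_n)$ with (quotients of) braid groups of the annulus, and notice that in both cases the quotient is the braid group $\mathcal{B}_n=A(A_{n-1})$, not the parabolic obtained by deleting a leaf of the same diagram. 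No analogous decomposition is known for $H_3$, $H_4$, $F_4$, $E_6$, $E_7$, $E_8$. Your proposed fibration route fails because the exceptional reflection arrangements are not fiber-type (not supersolvable), so the complement does not fiber over a smaller arrangement complement with punctured-plane fiber; and your proposed algebraic retraction is obstructed exactly where you say it is (the deleted generator is joined to $\sigma'$ by an odd label). The fallback of ``computing $\mathrm{obdim}$ directly from the stratification at infinity'' is essentially a restatement of the problem rather than a proof.

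The paper's lower bound goes a completely different way and avoids any splitting. It invokes the Davis--Huang obstructor complex built from the standard abelian subgroups of $A_\sigma$: for an irreducible spherical Artin group with nerve an $n$-simplex $\sigma$ this complex contains the barycenter $v$ of $\sigma$, whose link is the boundary of the barycentric subdivision of $\sigma$, a flag triangulation of $S^{n-1}$. By the main theorem of \cite{ADOS}, the octahedralization of this link has van Kampen dimension $2(n-1)$; octahedralizing the join $v*\mathrm{Lk}(v)$ suspends, adding one, so the obstructor complex has van Kampen dimension $2n-1$, and the inequality $\mathrm{actdim}\ge \mathrm{vkdim}+2$ gives $\mathrm{actdim}(A_\sigma)\ge 2n+1$ (which is $2(\text{rank})-1$ in your indexing). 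If you want to salvage your write-up, you must either prove the six semidirect-product decompositions --- which would be new results of independent interest and are not supplied by \cite{CrispParis} --- or replace the lower-bound argument by the obstructor-complex computation.
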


\begin{proof}
In the upcoming work by Davis and Huang \cite{DH}, an analog of $OL$ as in the RAAG case for general Artin groups is defined. They do so by studying the so called \emph{"standard abelian subgroups"} of spherical Artin groups. They are the maximal abelian subgroups of each (standard) parabolic subgroup of a particular Artin group. This collection of abelian subgroups can be encoded by a complex, which is a subdivision of the nerve $L$. For irreducible spherical Artin groups. For irreducible spherical Artin groups, this complex contain the barycenter of the simplex $L$.
%add some example and picture here????
Let $L = \sigma$ be the simplex of correct dimension $n$ for each Artin group of type from the list $\{  H_3, F_4, H_4, E_6, E_7, E_8 \}$ ( $n = 2$ for $A(H_3)$, $n = 3$ for $A(F_4)$ and so on).  The barycenter, denoted by $v$, of $L$ has the link is a triangulation of the sphere $S^{n-1}$. The link of $v$,  Lk$(v)$, which is the boundary of the barycentric subdivision of $L$, is a flag complex. And thus, by the main result of \cite{ADOS}, the van Kampen dimension of the octahedralization of Lk$(v)$ is $2(n-1)$. Now, the barycentric subdivision of $L$ is the join of $v$ and its link: $v * \text{Lk}(v)$; by the definition of the octahedralization of a complex, we have the octahedralization of $v * \text{Lk}(v)$ is the suspension of the octahedralization of the link of $v$. Thus, the van Kampen dimension of the octahedralization of $v * \text{Lk}(v)$ is $2(n -1) + 1 = 2n - 1$. Since the action dimension of $A_L$ is greater or equal to the van Kampen dimension of any of its obstructor complexes plus 2, it follows that actdim$(A_L) \ge 2n - 1 + 2 = 2n + 1$.Since we already proved the other direction of the inequality, the theorem follows. 
\end{proof}
\begin{remark}
The above proof works for all irreducible spherical Artin groups. 
\end{remark}
%\begin{remark}
%For Artin group which is the direct product of irreducible spherical Artin groups, its action dimension is the sum of the action dimensions of its factors. This follows from the result proven in \cite{Yoon}.
%\end{remark}

\section{Contractible complexes}\label{sec:Contractibility}
This section deals with the question of when a simplicial complex $L$ can be embedded into a contractible complex of the same dimension; and from that, we will define a new poset of groups on the poset of a contractible complex. 

\subsection{Embedding into contractible complexes}
The following lemma is standard. See \cite{Fenn}, for example, for references.
\begin{lemma} \label{lemma:EmbedinContractible1}
Suppose $L$ is a (connected) finite simplicial complex of dimension $d$, $d \ne 2$. Then $L$ can be embedded in a contractible complex $C$ of the same dimension if and only if $H^d(L, \mathbb{Z}) = 0$. 
\end{lemma}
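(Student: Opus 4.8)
I would prove the two implications separately. The forward direction is a one-line consequence of the cohomology exact sequence of a pair; the reverse direction is a cell-attachment construction, and the only genuine difficulty — the one place the hypothesis enters — is keeping the dimension from exceeding $d$.

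\emph{Necessity.} If $L$ is a subcomplex of a contractible complex $C$ with $\dim C = d$ (assume $d\ge 1$), then in the exact sequence
\[ H^d(C;\mathbb{Z}) \longrightarrow H^d(L;\mathbb{Z}) \longrightarrow H^{d+1}(C,L;\mathbb{Z}) \]
the first term vanishes since $C$ is contractible and $d\ge 1$, and the last vanishes since $C$ has no cells above dimension $d$, so the relative cochain complex is concentrated in degrees $\le d$; hence $H^d(L;\mathbb{Z})=0$.

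\emph{Sufficiency.} Assume $H^d(L;\mathbb{Z})=0$. First I would translate this homologically: $L$ is finite, so its homology is finitely generated, $H_d(L;\mathbb{Z})$ is free (a subgroup of the free group of $d$-chains), and by universal coefficients $H^d(L;\mathbb{Z})\cong\operatorname{Hom}(H_d(L),\mathbb{Z})\oplus\operatorname{Ext}(H_{d-1}(L),\mathbb{Z})$, so the hypothesis says exactly that $H_d(L;\mathbb{Z})=0$ and $H_{d-1}(L;\mathbb{Z})$ is free. If $d=1$ this means $L$ is a tree and $C=L$ works, so assume $d\ge 3$. I would then build $C$ by attaching cells of dimension $\le d$ (equivalently, by coning off subcomplexes simplicially): attach $2$-cells along edge-loops generating $\pi_1(L)$ to get a simply connected $L_2$ (still of dimension $d$, as $d\ge 3$); then, inductively for $k=2,\dots,d-2$, use Hurewicz to identify $\pi_k(L_k)\cong H_k(L_k;\mathbb{Z})$ and attach $(k+1)$-cells killing $H_k$ without altering $\pi_1$ or lower homology. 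No $d$-cell is used in these steps, so $H_d$ stays $0$, and a short cellular-chain computation shows that the one step affecting $H_{d-1}$ (the $(d-1)$-cells, at $k=d-2$) only adds a free summand; thus $L_{d-1}$ has $\pi_1=1$, $\widetilde H_i=0$ for $i\le d-2$, $H_{d-1}(L_{d-1};\mathbb{Z})$ free, and $H_d(L_{d-1};\mathbb{Z})=0$.

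\emph{The hard step.} The delicate part is the last one: I would use Hurewicz again to get $\pi_{d-1}(L_{d-1})\cong H_{d-1}(L_{d-1};\mathbb{Z})$, pick a \emph{basis} of this free group, represent each basis element by a $(d-1)$-cycle that is a subcomplex (after subdivision), and attach a $d$-cell along each. A cellular-chain computation shows that precisely because $H_{d-1}(L_{d-1})$ is free and $H_d(L_{d-1})=0$, the boundary $\partial_d$ on the resulting complex $C$ is injective, so the new $d$-cells kill $H_{d-1}$ \emph{without} creating any $H_d$; hence $\widetilde H_*(C)=0$. Since $C$ is also simply connected, Hurewicz together with Whitehead's theorem give that $C$ is contractible, and it is a $d$-dimensional complex containing $L$ (triangulated rel $L$, if one insists on a simplicial model, without raising the dimension). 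I expect this top-dimensional accounting to be the main obstacle, and it is exactly what forces the exclusion of $d=2$: there the $2$-cells that must be attached to kill $\pi_1$ produce a free $H_2$ which can only be removed by $3$-cells, violating the dimension bound.
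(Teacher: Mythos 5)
Your proof is correct, and in fact the paper offers no proof of this lemma at all: it is stated as standard with a pointer to Fenn's book, so there is nothing internal to compare against. Your argument is the standard one that such a reference would give. Both directions check out: the necessity argument via the pair sequence $H^d(C)\to H^d(L)\to H^{d+1}(C,L)$ is right (with the implicit convention $d\ge 1$; for $d=0$ a connected $L$ is already a point and the ``only if'' direction is vacuous or false depending on whether one reads reduced cohomology, a pedantic edge case worth a parenthetical). For sufficiency, the translation of $H^d(L;\mathbb{Z})=0$ into ``$H_d(L)=0$ and $H_{d-1}(L)$ free'' via universal coefficients and the freeness of top-dimensional cycles is exactly the right reformulation, and you correctly isolate the two places these enter: freeness of $H_{d-1}(L_{d-1})$ lets you choose a basis so that $\partial_d$ on the new $d$-cells is injective onto $H_{d-1}$, and $H_d(L_{d-1})=0$ (preserved throughout since no $d$-cells are attached before the last step) guarantees the long exact sequence of $(C,L_{d-1})$ yields $H_d(C)=0$. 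One small bookkeeping remark: for $d=3$ the inductive loop $k=2,\dots,d-2$ is empty and the step ``affecting $H_{d-1}$'' is the $\pi_1$-killing step itself; your observation that attaching cells only adds a free summand to the homology one degree up covers this case, but it is worth saying explicitly. The concluding appeal to simplicial approximation to realize $C$ as a genuine simplicial complex of dimension $d$ matches the remark the paper makes immediately after the lemma.
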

We treat the case $d = 2$ separately.
\begin{lemma}\label{lemma:EmbedinContractible2}
Suppose $L$ is 2-dimensional simplicial complex. If $H^2(L, \mathbb{Z}) = 0$ and $\pi_1(L)$ is normally generated by $r$ elements where $r = rk H_1(L, \mathbb{Z})$, then $L$ can be embedded in a contractible complex $C$ of the same dimension.
\end{lemma}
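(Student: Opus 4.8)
The plan is to obtain $C$ by attaching finitely many $2$-cells to $L$, using the hypotheses precisely to guarantee that this process kills all the homotopy of $L$ while keeping the dimension equal to $2$.

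First I would invoke the hypothesis that $\pi_1(L)$ is normally generated by $r$ elements $g_1,\dots,g_r$: represent each $g_i$ by a loop $\gamma_i$ in the $1$-skeleton of $L$, and attach a $2$-cell along each $\gamma_i$ to form a $2$-dimensional complex $L_1 \supseteq L$. Then $\pi_1(L_1) = \pi_1(L)/\langle\!\langle g_1,\dots,g_r\rangle\!\rangle = 1$. (If one insists that $C$ be simplicial with $L$ a subcomplex, first homotope each $\gamma_i$ to an edge-loop, cone it off with a new vertex, and triangulate rel $L$; this is routine and I would not dwell on it.) It then remains only to show that $L_1$ is acyclic, since a simply connected CW complex with trivial reduced homology is contractible by the Hurewicz and Whitehead theorems, and we may take $C = L_1$.

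To compute $H_*(L_1)$ I would use the long exact sequence of the pair $(L_1,L)$. Because $L_1/L \simeq \bigvee_{i=1}^{r} S^2$, it reads
\[
0 \to H_2(L) \to H_2(L_1) \to \mathbb{Z}^r \xrightarrow{\ \partial\ } H_1(L) \to H_1(L_1) \to 0,
\]
where $\partial$ sends the $i$-th standard generator to the homology class $[\gamma_i]$. Next I would unpack $H^2(L;\mathbb{Z}) = 0$ through universal coefficients: since $\dim L = 2$, the group $H_2(L;\mathbb{Z})$ is free (a subgroup of the free abelian group $C_2(L)$), so $0 = H^2(L;\mathbb{Z}) \cong \operatorname{Hom}(H_2(L),\mathbb{Z}) \oplus \operatorname{Ext}(H_1(L),\mathbb{Z})$ forces both $H_2(L) = 0$ and $H_1(L)$ torsion-free; combined with $r = \operatorname{rk} H_1(L;\mathbb{Z})$ this gives $H_1(L) \cong \mathbb{Z}^r$. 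Since $\pi_1(L_1) = 1$ gives $H_1(L_1) = 0$, the map $\partial\colon \mathbb{Z}^r \to \mathbb{Z}^r$ is a surjection of free abelian groups of equal rank, hence an isomorphism; substituting $H_2(L) = 0$ and the injectivity of $\partial$ into the sequence yields $H_2(L_1) = 0$. All higher homology vanishes for dimension reasons, so $L_1$ is acyclic and the argument closes as indicated.

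The one genuinely delicate point is the homological bookkeeping showing that the $r$ cells one is forced to add in order to kill $\pi_1$ do not introduce any new $2$-cycles — equivalently, that $\partial$ is injective — and this is exactly where the numerical hypothesis $r = \operatorname{rk} H_1(L;\mathbb{Z})$ enters, and why dimension $2$ requires it: in other dimensions one is free to destroy an unwanted top-dimensional homology class by attaching a cell of dimension one higher, as in the proof of Lemma \ref{lemma:EmbedinContractible1}, an option that is unavailable here. The triangulation issues and the Hurewicz/Whitehead step are entirely standard.
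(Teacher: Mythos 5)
Your argument is correct, and it is the standard (surely the intended) one: the paper actually states Lemma~\ref{lemma:EmbedinContractible2} without proof, so there is nothing to compare against beyond the surrounding remarks, which your construction matches --- attach $2$-cells along loops normally generating $\pi_1(L)$, then use $H^2(L;\mathbb{Z})=0$ (hence $H_2(L)=0$ and $H_1(L)\cong\mathbb{Z}^r$ via universal coefficients, since $H_2(L)=Z_2(L)$ is free) together with $r=\operatorname{rk}H_1(L;\mathbb{Z})$ to see that the boundary map $\mathbb{Z}^r\to H_1(L)$ in the long exact sequence of $(L_1,L)$ is a surjection between rank-$r$ groups, hence injective, so no new $H_2$ is created and Hurewicz--Whitehead finishes. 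Your diagnosis of where the numerical hypothesis enters, and why it is unavoidable precisely in dimension $2$, is exactly the point the paper's introduction alludes to. Two implicit assumptions you are silently using, consistent with the paper's context (the nerve of a finitely generated Artin group) but worth a word: $L$ should be finite (so that $H_1(L)$ is finitely generated and $r<\infty$) and connected (so that $\pi_1(L)$ is well defined); and the simplicial-approximation step you defer is the one the paper itself handles in the remark following the lemma.
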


Note that the complexes in Lemmas \ref{lemma:EmbedinContractible1} and \ref{lemma:EmbedinContractible2} can be made to be simplicial. This is because of the Simplicial Approximation Theorem \cite{Hatcher}.
\begin{theorem}
If $K$ is a finite simplicial complex and $L$ is an arbitrary simplicial complex, then any map $f: K \to L$ is homotopic to a map that is simplicial with respect to some iterated barycentric subdivision of $K$.
\end{theorem}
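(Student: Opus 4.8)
The plan is to give the standard proof of the simplicial approximation theorem, which rests on three ingredients: open stars, a Lebesgue-number argument, and the fact that iterated barycentric subdivision shrinks mesh to zero. First I would reduce to the case that $L$ is finite: since $K$ is finite, $|K|$ is compact, so $f(|K|)$ is a compact subset of $|L|$ and hence lies in a finite subcomplex of $L$; replacing $L$ by that subcomplex changes nothing in the statement.

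Recall that for a vertex $w$ of a simplicial complex the open star $\mathrm{st}(w)$ is the union of the interiors of the simplices having $w$ as a vertex; the sets $\{\mathrm{st}(w)\}$ over all vertices $w$ of $L$ form an open cover of $|L|$, and vertices $w_0,\dots,w_k$ span a simplex of $L$ if and only if $\bigcap_i \mathrm{st}(w_i) \neq \emptyset$. Pulling back, $\{f^{-1}(\mathrm{st}(w))\}$ is an open cover of the compact metric space $|K|$, so it has a Lebesgue number $\delta > 0$. I would then invoke the standard estimate that the mesh of the $m$-th barycentric subdivision $K^{(m)}$ is at most $(n/(n+1))^m$ times the mesh of $K$, where $n = \dim K$; choosing $m$ large enough that $\mathrm{mesh}(K^{(m)}) < \delta/2$ guarantees that every open star of a vertex of $K^{(m)}$ has diameter less than $\delta$, hence lies in some $f^{-1}(\mathrm{st}(w))$.

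For each vertex $v$ of $K^{(m)}$ choose a vertex $g(v)$ of $L$ with $f(\mathrm{st}(v)) \subseteq \mathrm{st}(g(v))$. The key verification is that $g$ is simplicial: if $v_0,\dots,v_k$ span a simplex of $K^{(m)}$ then $\bigcap_i \mathrm{st}(v_i)$ contains the interior of that simplex and so is nonempty, whence $\bigcap_i \mathrm{st}(g(v_i)) \supseteq f\bigl(\bigcap_i \mathrm{st}(v_i)\bigr) \neq \emptyset$, and by the star criterion the $g(v_i)$ span a simplex of $L$. Finally, $g$ is a simplicial approximation of $f$: if $x$ lies in the interior of a simplex $v_0\cdots v_k$ of $K^{(m)}$, then $g(x)$ lies in the closed simplex of $L$ spanned by $g(v_0),\dots,g(v_k)$, and $f(x) \in \bigcap_i \mathrm{st}(g(v_i))$ lies in the same closed simplex; hence the straight-line homotopy $H(x,t) = (1-t)f(x) + t\,g(x)$, computed inside that simplex, is well defined and continuous, and gives $f \simeq g$.

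I do not expect a serious obstacle: every step is classical. The two points meriting care are the mesh estimate for barycentric subdivision, which is a routine computation with barycentric coordinates, and the observation that the straight-line homotopy makes sense precisely because the star condition forces $f(x)$ and $g(x)$ into a common closed simplex of $L$.
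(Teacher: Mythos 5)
The paper offers no proof of this statement: it is the classical Simplicial Approximation Theorem, quoted with a citation to Hatcher, so there is no internal argument to compare against. Your proof is the standard one and is correct in all essentials (reduction to a finite subcomplex of $L$ by compactness, the open-star cover and star criterion, the Lebesgue number, and the mesh estimate for iterated barycentric subdivision). The one imprecision is in the final step: $f(x)$ need not lie in the closed simplex spanned by $g(v_0),\dots,g(v_k)$ (take $K$ a point mapped to the barycenter of a $2$-simplex and $g$ sending it to a vertex); the correct statement is that since $f(x)\in\bigcap_i \mathrm{st}(g(v_i))$, the carrier of $f(x)$ in $L$ has every $g(v_i)$ among its vertices, so its closure contains both $f(x)$ and $g(x)$, and the straight-line homotopy should be carried out in that closed simplex.
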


\subsection{A new poset of groups}\label{section:newcomplexofgroups}
Let $L$ be the nerve of an Artin group $A_L$. We have a poset of group $\G(\PL)$ as described in Section \ref{section:Poset_of_groups}. The colimit group of this poset of groups is the Artin group. Let $C$ be a contractible simplicial complex, which contains $L$ as subcomplex. We will define a new complex of groups $\G(\P(C))$, which has the same colimit groups $A_L$.

\medskip

\textbf{A new poset}. For each $\sigma \in \P(C)$, if $\sigma$ is also in $\PL$, define the local group at $\sigma$ in $\G(\P(C))$ is the same as the local group at $\sigma$ in $\G(\PL)$. If $\sigma$ is not in $\PL$, let $\rho$ be the maximal simplex in $\PL$ such that $\rho < \sigma$, then let the local group at $\sigma$ be the local group $G_\rho$ of $\G(\PL)$. If there is no such $\rho$, then let the local group at $\sigma$ be the trivial group. Obviously, the two complexes of groups have the same colimit group, which is the Artin group. We will use this complex of groups in the construction in Section \ref{section:Manifold_construction}.

\medskip

\textbf{Aspherical manifolds for local groups}. For each $\sigma$ of dimension $i$ in $\PL$, there is a $K(A_\sigma, 1)$ manifold of dimension $2i + 1$ (Proposition \ref{prop:localmanifold}). If $\sigma$ is not in $\PL$ and if the local group at $\sigma$ is trivial, define $M_\sigma = D^{2i + 1}$. If the local group at $\sigma$ is $A_\rho$ for $\rho$ is a maximal simplex in $\PL$ which is contained in $\sigma$, then let $M_\sigma = M_\rho \times D^{2i - 2h}$, where $h$ is the dimension of $\rho$. Thus, for each $\sigma$ in $\P(L_1)$, we have a manifold $M_\sigma$ of dimension $2i + 1$.

It is easy to see that the statements in Proposition \ref{prop:localmanifold} and lemma \ref{lemma:disjointsubmanifolds} remain valid for these new manifolds. 

\medskip

Now suppose that the Artin group $A_L$ satisfies the $K(\pi, 1)$-Conjecture. As we discussed in Subsection \ref{section:kpi1}, this is equivalent to saying that the basic construction $U(A_L, |\SL|)$ is contractible. It follows that, by Lemma \ref{lemma:UPandUS}, $U(G, |\P(C)|)$ is also contractible. 

We will consider the complex of groups $\G(\P(C))$ in the next section. But for convenience, we will use $L$ instead of $C$. Thus, we have a complex of group $\G(\PL)$ with $U(G, |\PL|)$ contractible and for each local group $G_\sigma$ of $\sigma \in \PL$, there is a $K(G_\sigma, 1)$ manifold model $M_\sigma$ of dimension $2i + 1$, such that the statements in Proposition \ref{prop:localmanifold} and Lemma \ref{lemma:disjointsubmanifolds} hold.

%SECTION MANIFOLD CONSTRUTION.
\section{Construction of the Manifold}\label{section:Manifold_construction}
\subsection{Thickening of a simplicial complex}
The notion of thickening of a simplicial complex is studied in a broad view in \cite{Mazur} and \cite{Wall}. Here, we will consider only a simple version of thickening, a thickening as a regular neighborhood of a complex of dimension $d$ in a manifold of dimension $2d + 1$ or $2d$. In the following subsections, we will denote a thickening of $L$ by Th$L$ or Th($L$, $M$) if we want to specify the manifold $M$.
 
\textbf{Basic case.}
It is well known that for any simplicial complex $L$ of dimension $d$, we have a simplicial embedding of $L$ as a subcomplex of some PL (piecewise linear) triangulation of $S^{2d+1}$. Thus, we have a thickening Th$L$ of $L$, which is a submanifold of $S^{2d+1}$. The restriction of this thickening to each $i-$simplex $\sigma$ is a disk of the form $\sigma \times D^{2d+1 - i}$, where $D^{2d+ 1 - i}$ is a disk of dimension $2d + 1 - i$. 

The \emph{thickening of the cone on} $L$ of dimension $d$, denoted by Th(Cone$L$), is the disk $D^{2d+2}$, together with the distinguished submanifold of $\d D^{2d+2} = S^{2d + 1}$, which is defined as the thickening of $L$ in $S^{2d + 1}$. (Note: the cone on $L$ has dimension $d+1$, thus it has an usual thickening in $S^{2d+3}$. But since it is a cone, we can find a thickening of lower dimension.).

\textbf{Thickened dual cones}
Let $L$ be a simplicial complex and $\sigma$ is a simplex in $L$. Recall that the dual cone, $D_\sigma$, of $\sigma$ is the cone on the barycentric subdivision of the link of $\sigma$ in $L$. In other words, let $\P = \PL$, then $D_\sigma = |\P_{\ge \sigma}|$ is its dual cone. Both $\sigma$ and its dual cone $D_\sigma$ are subcomplexes of the barycentric subdivision of $L$. If the codimension of $\sigma$ in $L$ is $k$, then 
\begin{itemize}
\item dim$Lk'_\sigma = k - 1$,
\item Th$Lk'_\sigma \subset S^{2k-1}$ is a submanifold of codimension 0,
\item and Th$D_\sigma$ is a disk of dimension $2k$.
\end{itemize}
 
Next suppose that $\tau > \sigma$ is a coface of $\sigma$ of codimension $l$ in $L$ (i.e., $\sigma$ is a subsimplex of $\tau$ and $l < k$). In $Lk'_\sigma$, $\tau$ is a vertex and the thickening of $\tau$ in the thickening of $Lk'_\sigma$, denoted by $T(\tau, Lk'_\sigma)$, is a disk of dimension $2k - 1$. We have
\begin{itemize}
\item $D_\tau \subset Lk'_\sigma$,
\item Th$D_\tau$ is a $2l$-disk, which can be embedded in the $(2k - 1)$-disk,  $T(\tau, Lk'_\sigma)$.
\end{itemize}
Thus, $T(\tau, Lk'_\sigma)$ splits as product of 2 disks
$$T(\tau, Lk'_\sigma) = D^{2(k-l) - 1} \times D^{2l}.$$

\subsection{The construction}
For each simplex $\sigma$ of dimension $i$ of the poset of groups $\G(\P(L))$, there is an aspherical manifold $M_\sigma$ of dimension $2i + 1$. The fundamental group of $M_\sigma$ is the local group at $\sigma$. Define
$$M(D_\sigma) := M_\sigma \times \text{Th} D_\sigma.$$
Since the thickening of the dual cone is a disk, $M(D_\sigma)$ contracts to $M_\sigma$. If the dimension of $L$ is $d$, then the codimension of $\sigma$ is $k = d - i$. Hence, the dimension of Th $D_\sigma$ is $2k = 2(d - i)$. And the dimension of $M(D_\sigma)$ is $2i + 1 + 2(d - i) = 2d + 1$. The dimension of $M(D_\sigma)$ does not depend on $\sigma$. Thus, we  define a manifold of dimension $2d + 1$, which contracts to $M_\sigma$ for any simplex $\sigma$ in $L$. We will use these manifolds to construct a manifold for the Artin group.

\textbf{Gluing}. In this construction we glue together the $M(D_\sigma)$ along parts of their boundaries in the same way the dual cones are glued together to get the barycentric subdivision of $L$. 

Let $\sigma < \tau$ be simplices in $L$ with dim($\sigma) = i$ and dim $(\tau) = j$. We have the boundaries of $M(D_\sigma) = M_\sigma \times \text{Th} D_\sigma$ and $M(D_\tau) = M_\tau \times \text{Th} D_\tau$ are
$$\d M(D_\sigma) = \d M_\sigma \times \text{Th} D_\sigma \cup M_\sigma \times \d  \text{Th} D_\sigma$$
$$\d M(D_\tau) = \d M_\tau \times \text{Th} D_\tau \cup M_\tau \times \d  \text{Th} D_\tau$$
Consider two parts of these boundaries: $M_\sigma \times \d  \text{Th} D_\sigma$ and $\d M_\tau \times \text{Th} D_\tau$, they have a common codimension 0 submanifold
$$M_\sigma \times D^{2(j-i) - 1} \times D^{2(d-j)}.$$
This follows from the following observations:
\begin{itemize}
\item T($\tau, Lk'_\sigma$) is a $2(d-i) - 1$-disk in $\d  \text{Th} D_\sigma$ and there is the splitting T($\tau, Lk'_\sigma$) = $D^{2(j - i) - 1} \times D^{2(d - j)}$,
\item \text{Th} $D_\tau = D^{2(d - j)}$, and  $M_\sigma \times D^{2(j-i) - 1} \subset \d M_\tau$.
\end{itemize}
 We can do this procedure for all simplices in $L$ and construct a manifold
 $$M(L) := ( \coprod_{\sigma \in \P} M(D_\sigma) ) / \sim.$$

 \begin{claim} 
 The result of the above construction is a manifold.
 \end{claim}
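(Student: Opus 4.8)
The plan is to show that $M(L)$ is a topological (PL, say) manifold with boundary of dimension $2d+1$ by checking it is locally Euclidean; Hausdorffness and second countability are immediate, since $M(L)$ is a gluing of finitely many compact manifolds with boundary along codimension-$0$ submanifolds of their boundaries. For a pair $\sigma<\tau$ with $\dim\sigma=i$, $\dim\tau=j$, the gluing is along $M_\sigma\times D^{2(j-i)-1}\times D^{2(d-j)}$, which sits as a codimension-$0$ submanifold of $\partial M(D_\sigma)$ on one side and of $\partial M(D_\tau)$ on the other, so an interior point of such an overlap has a full-ball neighborhood patched from two half-balls. Hence the only genuine issue is the local structure at a point lying in three or more of the pieces $M(D_\sigma)$, which does occur: over the barycenter of a top-dimensional simplex all dual cones $D_\sigma$, and therefore the relevant parts of all the $M(D_\sigma)$, come together.

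To organize the multi-piece case I would first record a projection $\pi\colon M(L)\to L'$ onto the barycentric subdivision, induced by the regular-neighborhood retractions $\mathrm{Th}\,D_\sigma\to D_\sigma$; one chooses these retractions compatibly with the identifications made in the construction, so that $\pi$ is well defined and continuous, the compatibility being dictated by the splitting $T(\hat\tau,Lk'_\sigma)=D^{2(j-i)-1}\times D^{2(d-j)}$ with $D^{2(d-j)}=\mathrm{Th}\,D_\tau$ already recorded in the ``thickened dual cones'' subsection. Now fix $x\in M(L)$ and let $y=\pi(x)$ lie in the open simplex $\langle\hat\rho_0<\cdots<\hat\rho_p\rangle$ of $L'$. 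The pieces meeting $\pi^{-1}(y)$ are exactly the $M(D_\sigma)$ with $\sigma\le\rho_0$, and in each of them $x$ is represented by a pair $(m,t_\sigma)$ with the \emph{same} point $m$, which lies in $M_{\rho'}$ for a minimal simplex $\rho'\le\rho_0$ (well defined by the disjointness in Lemma \ref{lemma:disjointsubmanifolds}); here I use the nesting $M_{\rho'}\subset\partial M_\sigma$ from Proposition \ref{prop:localmanifold} and the trivial-normal-bundle statements of Lemma \ref{lemma:disjointsubmanifolds}.

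I would then build a Euclidean chart at $x$ by splitting off factors one at a time: the $M$-direction contributes a ball, or a half-ball if $m\in\partial M_{\rho'}$, of dimension $2\dim\rho'+1$; the trivialized normal bundle of $M_{\rho'}$ inside $M_{\rho_0}$ together with the collar of $\partial M_{\rho_0}$ contribute further Euclidean factors; and the remaining factors, coming from the thickened dual cones $\mathrm{Th}\,D_\sigma$ for $\rho'\le\sigma\le\rho_0$ glued along the disks $T(\hat\tau,Lk'_\sigma)$, assemble over a small neighborhood of $y$ into a single disk. Multiplying these, and using that the construction already forces $\dim M(D_\sigma)\equiv 2d+1$, gives a chart of the correct dimension, which finishes the argument.

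The crux, and the step I expect to be the main obstacle, is exactly the last assertion: the union of the thickened dual cones around $y$ is a disk. This is where the hypothesis that $L$ need not be a manifold near $\rho_0$ must be absorbed by the thickening, so it cannot be read off from $L'$ alone. I would prove it by induction on $d-\dim\rho_0$, i.e. on the number of blow-up/thickening directions, peeling off one $\mathrm{Th}\,D$-factor at a time and using that each $\mathrm{Th}\,D_\rho$ is a disk with $\mathrm{Th}\,Lk'_\rho\subset S^{2(\mathrm{codim}\,\rho)-1}$ a codimension-$0$ submanifold, together with the standard fact that the thickening of a cone is a disk. The danger to rule out is that the local model might end up being a cone on a space with nontrivial reduced homology; it is precisely the product structure $T(\hat\tau,Lk'_\sigma)=D^{2(j-i)-1}\times D^{2(d-j)}$ of the thickened dual cells, and the triviality of the normal bundles in Lemma \ref{lemma:disjointsubmanifolds}, that prevents this and makes the inductive step go through.
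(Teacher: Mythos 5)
Your proposal is correct in strategy and is in fact considerably more careful than the paper's own proof, which is essentially one sentence: the paper asserts that the only possible failure of manifoldness is that some boundary piece of a local manifold $M_\tau$ gets glued more than once, and disposes of this by invoking Lemma \ref{lemma:disjointsubmanifolds} to choose disjoint copies $M_\sigma\times D^{2(j-i)-1}\subset\partial M_\tau$. You use the same key inputs (the disjointness of the $M_\sigma$ in $\partial M_\tau$ and the triviality of their normal bundles from Proposition \ref{prop:localmanifold}) at exactly the same place, but you additionally carry out the local analysis at points where three or more pieces $M(D_\sigma)$ meet, which the paper does not discuss at all; that analysis is a legitimate concern and your factorization of a neighborhood into an $M$-direction, normal-bundle/collar directions, and thickened-dual-cone directions is the right way to organize it. The one step you leave as a sketch --- that the thickened dual cones $\mathrm{Th}\,D_\sigma$ for $\sigma\le\rho_0$ assemble near $y$ into a single disk --- does not actually need a fresh induction: it is precisely the statement that the construction with all local groups trivial (each $M_\sigma=D^{2i+1}$) reproduces the regular neighborhood of $L$ in $S^{2d+1}$, and the thickenings and the splittings $T(\hat\tau,Lk'_\sigma)=D^{2(j-i)-1}\times D^{2(d-j)}$ were defined from that regular neighborhood in the first place, so the local model is Euclidean by construction. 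Closing the gap that way, your argument is complete and, if anything, supplies justification that the paper's terse proof implicitly assumes.
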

 \begin{proof}
 The only problem that prevents the construction result to be a manifold is that the same piece (or some part of it) of boundary of some local manifold is glued more than one time in $\d M_\tau \times \text{Th} D_\tau$. But this is not the case. By lemma \ref{lemma:disjointsubmanifolds}, we can choose a collection of disjoint sub-manifolds $M_\sigma$ for $M_\tau$. And for each such $M_\sigma$, choose a small disk around it to get $M_\sigma \times D^{2(j-i) - 1}$.

 \end{proof}
 
 Theorem \ref{theorem:maintheorem} follows from the following claim. 
\begin{claim}
The manifold $M(L)$ is aspherical.
\end{claim}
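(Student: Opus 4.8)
The plan is to identify $M(L)$, up to homotopy equivalence, with the classifying space $BG(\PL)$ of the poset of groups $\G(\PL)$ built in Section~\ref{section:newcomplexofgroups} (where, following the convention adopted there, ``$L$'' stands for the contractible complex $C$), and then to invoke the $K(\pi,1)$-Conjecture to recognize that classifying space as an Eilenberg--MacLane space for $A_L$. Since $M(L)$ was already shown to be a $(2d+1)$-manifold, asphericity is the only thing left to establish, and Theorem~\ref{theorem:maintheorem} then follows by passing to the universal cover.

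First I would check that $M(L)$, together with the projection $\pi : M(L) \to |\PL|$ obtained by collapsing each $\mathrm{Th}\,D_\sigma$ onto the dual cone $D_\sigma$ and assembling these collapses compatibly with the gluing, is a realization with aspherical complexes of $\G(\PL)$ in the sense of Section~\ref{subsec:ComplexofSpaces}. The point is that the construction glues $M(D_\sigma) = M_\sigma \times \mathrm{Th}\,D_\sigma$ to $M(D_\tau)$ whenever $\sigma < \tau$ by exactly the combinatorial recipe of the ``explicit description'' of a realization: the disk $\mathrm{Th}\,D_\sigma$ plays the role of the factor $D_\sigma$, and Proposition~\ref{prop:localmanifold}(2) supplies the inclusion $M_\sigma \hookrightarrow \partial M_\tau$ with trivial normal bundle, which is precisely the data needed to glue the $M_\sigma \times D_\sigma$ block to the $M_\tau \times D_\tau$ block along $M_\sigma \times D_\tau$. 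Because $\mathrm{Th}\,D_\sigma$ is a disk, $M(D_\sigma)$ deformation retracts onto $M_\sigma$, so the fiber $\pi^{-1}(\sigma)$ over the cone point of $D_\sigma$ is $M_\sigma$, which by Proposition~\ref{prop:localmanifold}(1) has the homotopy type of $BG_\sigma$. One also verifies that the homomorphism $G_\sigma \to G_\tau$ induced by $M_\sigma \hookrightarrow \partial M_\tau \subset M(D_\tau)$ coincides with the structure map $\psi_{\sigma\tau}$ of $\G(\PL)$, so the associated poset of groups is indeed $\G(\PL)$, whose colimit is $A_L$ by Section~\ref{section:newcomplexofgroups}.

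Granting this, Lemma~\ref{lemma:equivalence} gives a homotopy equivalence $M(L) \simeq BG(\PL)$. Now $|\PL|$ is the barycentric subdivision of the contractible complex $C$, hence contractible and in particular simply connected, and, as noted at the end of Section~\ref{section:newcomplexofgroups}, the hypothesis that $A_L$ satisfies the $K(\pi,1)$-Conjecture together with Lemma~\ref{lemma:UPandUS} makes $U(A_L, |\PL|) = U(G, |\P(C)|)$ contractible. Therefore Proposition~\ref{prop:contractibility} applies and yields $BG(\PL) \simeq BG = BA_L$. Combining, $M(L)$ is homotopy equivalent to the Eilenberg--MacLane space $BA_L$; hence $M(L)$ is aspherical, $\pi_1(M(L)) \cong A_L$, and its universal cover is a contractible $(2d+1)$-manifold on which $A_L$ acts properly and freely, so $\mathrm{actdim}(A_L) \le 2d+1 = 2n+1$.

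The step I expect to be the main obstacle is the verification in the second paragraph: that replacing the dual cones $D_\sigma$ by their thickenings $\mathrm{Th}\,D_\sigma$ leaves the combinatorics of the realization intact, that the overlaps between adjacent pieces are exactly the codimension-$0$ submanifolds $M_\sigma \times D^{2(j-i)-1} \times D^{2(d-j)}$ claimed in the construction, and that the retractions $M(D_\sigma) \to M_\sigma$ can be chosen mutually compatibly and compatibly with a section over the $1$-skeleton. The disjointness of the submanifolds $M_\rho \subset \partial M_\sigma$ provided by Lemma~\ref{lemma:disjointsubmanifolds} is what makes these overlaps embedded and unambiguous; once that bookkeeping is in place, the homotopy-theoretic conclusion is formal in view of Lemma~\ref{lemma:equivalence} and Proposition~\ref{prop:contractibility}.
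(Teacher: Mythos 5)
Your proposal follows essentially the same route as the paper: the paper's proof likewise identifies $M(L)$ as a realization with aspherical complexes, invokes Lemma \ref{lemma:equivalence} to get $M(L) \simeq BG(\P(L))$, and then uses the contractibility of $U(G,|\P(L)|)$ together with Proposition \ref{prop:contractibility} to conclude that this is $BG$. The only difference is that you spell out the verification that $M(L)$ is indeed a realization of $\G(\P(L))$ (which the paper takes for granted), so your argument is correct and, if anything, slightly more careful.
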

\begin{proof}
We know that $U(G, |\P(L)|)$ is contractible. Since $L$ is simply connected (it is contractible), by proposition \ref{prop:contractibility}, $BG(\P(L))$ is homotopy equivalent to the Eilenberg-MacLane space $BG$. And any realization is homotopy equivalent to $BG(\P(L))$ by Lemma \ref{lemma:equivalence}. Thus, the realization by aspherical manifolds $M$ is aspherical.
\end{proof} 

Since the manifold $M(L)$ we constructed is aspherical, its universal cover is contractible and it admits a proper action from the Artin group. This implies that the action dimension of the Artin group is less than or equal to the dimension of $M(L)$, which is $2n + 1$.

\textbf{Remark on Corollary \ref{cor:flagcomplex}}. When the nerve of the Artin group is a flag complex, we can prove Corollary \ref{cor:flagcomplex} directly using lemma \ref{lemma:flagcomplex} and the fact that we can embed $L$ into a contractible flag complex of the same dimension.

\textbf{Proof of Corollary \ref{cor:irreducible}}. From Section \ref{section:Spherical_Artin_Groups}, we know that the action dimension of an irreducible spherical Artin group with the nerve $\sigma$ a simplex of dimension $n$ is $2n + 1$. We also know that for any subgroup $G$ of a group $\Gamma$, we have
$$\text{actdim}(G) \le \text{actdim}(\Gamma).$$
Thus, if an Artin group $A_L$ has a subgroup which is an irreducible spherical Artin group $A_\sigma$ with dim($L$) = dim($\sigma$) = $n$, then actdim($A_L$) $\ge$ actdim($A_\sigma$) = $2n + 1$. Theorem \ref{theorem:maintheorem} implies that actdim($A_L$) = $2n + 1$.

\medskip

\bibliographystyle{alpha}

\bibliography{Actdim_Artin}

\end{document}